\tikzstyle{none}=[draw=none]   
\tikzstyle{bigtiparrow}=[->,thick, >=angle 90]
\tikzstyle{bigtiparrow2}=[->,thick, >=angle 90,preaction={draw=white, -,line width=6pt}]
\tikzstyle{lrarrow}=[<->,thick, >=angle 90,preaction={draw=white, -,line width=6pt}]
\tikzstyle{new}=[rectangle,fill=white,draw=white, inner sep=2pt]
\tikzstyle{new2}=[rectangle,fill=white,draw=white, inner sep=6pt]
\DeclareMathOperator{\rng}{\mathrm{rng}}
\DeclareMathOperator{\supp}{\mathrm{supp}}
\newcommand{\Aut}{\mathrm{Aut}}
  \newcommand{\N}{\mathbb N}
  \newcommand{\Z}{\mathbb Z}
  \newcommand{\LL}{\mathrm L}
 \newcommand{\dom}{\mathrm{dom}\;}
  \newcommand{\eps}{\epsilon}
  \newcommand{\inv}{^{-1}}
  \renewcommand{\geq}{\geqslant}
  \newcommand{\la}{\left\langle}
  \newcommand{\ra}{\right\rangle}
\newtheorem{thm}{Theorem}
\newtheorem{cor}[thm]{Corollary}
\newtheorem{lem}[thm]{Lemma}
\newtheorem{prop}[thm]{Proposition}
\theoremstyle{definition}
\newtheorem{df}[thm]{Definition}
\newtheorem*{rmq}{Remark}
\newtheorem{thmi}{Theorem}
\renewcommand*{\thefootnote}{\fnsymbol{footnote}}
\title{Connected Polish groups with ample generics}
\author{Adriane Kaïchouh and François Le Maître\footnote{Research supported by the Interuniversity Attraction Pole DYGEST and Projet ANR-14-CE25-0004 GAMME.}}
\begin{document}

\setcounter{footnote}{1}

\maketitle
\renewcommand*{\thefootnote}{\arabic{footnote}}

\begin{abstract}
In this paper, we give the first examples of connected Polish groups that have ample generics, answering a question of Kechris and Rosendal. We show that any Polish group with ample generics embeds into a connected Polish group with ample generics and that full groups of type III hyperfinite ergodic equivalence relations have ample generics. We also sketch a proof of the following result: the full group of any type III ergodic equivalence relation has topological rank 2.
\end{abstract}

Ample generics were introduced by Kechris and Rosendal \cite{MR2308230} as a fundamental property underlying many phenomenons observed for various non-archimedean\footnote{A topological group is non-archimedean if it admits a basis of neighborhoods of the identity consisting of open subgroups. Every non-archimedean Polish group is a closed subgroup of the Polish group $\mathfrak S_\infty$ of all permutations of the integers (see \cite[Thm. 1.5.1]{MR1425877}). The group $\mathfrak S_\infty$ is endowed with the topology of pointwise convergence on the discrete set $\N$.} Polish groups. A Polish group has \textbf{ample generics} if for every $n\in\N$, the diagonal conjugacy action of $G$ on $G^n$ has a comeager\footnote{A subset of a Polish space is comeager if it contains a dense countable intersection of open sets, so that it is large in the Baire category sense.} orbit. In particular, $G$ must have a comeager conjugacy class, which excludes locally compact groups from this class of groups as was recently shown by Wesolek \cite{Wesolek:2013it}. 

Yet, as far as non-archimedean Polish groups are concerned, a number of interesting examples arise: the group $\mathfrak S_\infty$ of all permutations of the integers, the automorphism group of the rooted $\infty$-regular tree and the homeomorphism group of the Cantor space, to name a few (for $\mathfrak S_\infty$ and $\Aut(\N^{<\N})$, see \cite{MR2308230}, for $\mathrm{Homeo}(2^\N)$, this is a result of Kwiatkowska, see \cite{MR3007646}). 

One of the main motivations for finding groups with ample generics is that they satisfy a very strong property called the automatic continuity property: if $G$ is a Polish group with ample generics, every group homomorphism from $G$ to a separable group $H$ has to be continuous (\cite[Thm. 1.10]{MR2308230}). 
The automatic continuity property means we can recover the topology on the group from its algebraic structure only. In particular, the topology on a Polish group $G$ with ample generics is the unique possible Polish group topology on $G$.

Another motivation is the Bergman property: a group $G$ has the Bergman property if every isometric $G$-action on a metric space has a bounded orbit (the action need not be continuous). This implies that every isometric affine $G$-action on a Hilbert space has a fixed point, that is, $G$ has property (FH) as a discrete group. Besides, every $G$-action on a tree fixes either a vertex or an edge ($G$ has property (FA)). In the context of non-archimedean Polish groups, Kechris and Rosendal showed that every oligomorphic group with ample generics has the Bergman property, which applies to the three examples above.

Outside the non-archimedean world, however, ample generics often fail badly. For instance, in the group of measure-preserving bijections of a standard probability space, in the unitary group of a separable Hilbert space or in the isometry group of the Urysohn space, all conjugacy classes are meager (see \cite[Sec. 4]{MR2534176}). This led Kechris and Rosendal to ask the following question: is there a Polish group with ample generics that is not a non-archimedean group? 

In this paper, we exhibit two classes of examples of connected Polish groups with ample generics. Since non-archimedean groups are totally disconnected, these examples provide a positive answer to Kechris and Rosendal's question. Note that this question has simultaneously been answered by Malicki \cite{Malicki:2015fj}. His examples differ significantly from ours. Indeed, they arise as Polishable subgroups of $\mathfrak S_\infty$ so they are totally disconnected. Here is our first result.

\begin{thmi}[Cor. \!\ref{cor:embedinample2}]\label{thm:embedinample} Every Polish group with ample generics embeds into a contractible Polish group with ample generics.
\end{thmi}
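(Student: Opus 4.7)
The plan is to exhibit, for any Polish group $G$ with ample generics, a contractible Polish group with ample generics into which $G$ embeds as a topological group. The natural candidate is $L^0(G) := L^0([0,1], \lambda; G)$, the group of measurable maps from the unit interval with Lebesgue measure to $G$, modulo equality almost everywhere, with pointwise multiplication and the topology of convergence in measure. Standard arguments show that $L^0(G)$ is a Polish group, that $\iota \colon G \to L^0(G)$ sending $g$ to the corresponding constant function is a closed topological group embedding, and that $L^0(G)$ is contractible via the continuous homotopy $H \colon L^0(G) \times [0,1] \to L^0(G)$ defined by $H(f,s)(t) = f(t)$ when $t \geq s$ and $H(f,s)(t) = e$ otherwise; thus $H(\cdot, 0)$ is the identity and $H(\cdot, 1)$ is constant equal to $e$.

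The crux is to show that $L^0(G)$ inherits ample generics from $G$. Fix $n \in \N$, identify $L^0(G)^n$ with $L^0(G^n)$, and let $O \subseteq G^n$ be the comeager conjugacy orbit for the diagonal $G$-action; write $O = \bigcap_k V_k$ with each $V_k \subseteq G^n$ open dense. I would first prove that the set $\tilde O := \{f \in L^0(G^n) : f(t) \in O \text{ for almost every } t\}$ is comeager in $L^0(G^n)$. For each $k, m \in \N$, the set $\{f \in L^0(G^n) : \lambda(f\inv(G^n \setminus V_k)) < 1/m\}$ is open by upper semicontinuity of $f \mapsto \lambda(f\inv(F))$ for closed $F$, and dense because any $f$ can be approximated in measure by a simple function whose finitely many values can then be perturbed into $V_k$ using density. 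Intersecting over $k, m$ yields comeagerness of $\tilde O$.

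It remains to show that any two elements of $\tilde O$ are conjugate in $L^0(G)$. Given $f, f' \in \tilde O$, the set $A := \{(t, h) \in [0,1] \times G : h f_i(t) h\inv = f'_i(t) \text{ for all } i \leq n\}$ is Borel, and almost every vertical section is nonempty because $f(t)$ and $f'(t)$ lie in the common orbit $O$. The Jankov--von Neumann uniformization theorem then yields a $\lambda$-measurable selector $g \colon [0,1] \to G$ with $(t, g(t)) \in A$ for almost every $t$; this $g$ represents an element of $L^0(G)$ conjugating $f$ to $f'$. The main obstacle I foresee is the comeagerness of $\tilde O$: while the ingredients (upper semicontinuity, simple-function approximation) are standard, threading them through the convergence-in-measure topology requires care. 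Once that step is complete, the measurable-selection argument is essentially formal, and ample generics at every level $n$ follow.
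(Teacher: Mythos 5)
Your proposal is correct and follows essentially the same route as the paper: pass to $\LL^0(X,\mu;G)$ with the constant-map embedding and the same contraction, show that maps landing almost everywhere in the comeager diagonal conjugacy orbit of $G^n$ form a comeager subset of $\LL^0(X,\mu;G^n)$ (the paper's Lemmas \ref{lem:mugreatepsopen}--\ref{lem:comeagerL^0}), and use Jankov--von Neumann uniformization to produce the conjugating element (the paper conjugates everything to a constant function, you conjugate two elements of $\tilde O$ directly, which is the same idea). The only nitpicks are that a comeager orbit need only \emph{contain} a dense $G_\delta$ set $\bigcap_k V_k$ rather than equal it, and that one should replace $f,f'$ by Borel representatives before forming the set $A$, both of which are harmless.
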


Let us briefly describe the construction behind this theorem. Given a Polish group $G$ and a standard probability space $(X,\mu)$, let $\LL^0(X,\mu,G)$ denote the group of measurable maps from $X$ to $G$. It is a contractible Polish group which embeds $G$ via constant maps. Theorem \ref{thm:embedinample} is then a consequence of the following observation.

\begin{thmi}[Cor. \ref{cor:embedinample}]\label{thm: ample generics carry to L0}
Whenever $G$ has ample generics, so does $\LL^0(X,\mu,G)$.
\end{thmi}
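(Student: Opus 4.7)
The plan is to pinpoint explicitly the comeager $n$-th diagonal conjugacy class in $\LL^0(X,\mu,G)$: it should consist of those maps whose values lie almost surely in the comeager conjugacy class of $G^n$. One then has to verify both that this set is comeager and that it really is a single orbit.

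\textbf{Step 1 (reduction to $\LL^0(X,\mu,G^n)$).} I would start by identifying $\LL^0(X,\mu,G)^n$ with $\LL^0(X,\mu,G^n)$ as topological groups, under which the diagonal conjugation action of $\LL^0(X,\mu,G)$ becomes pointwise diagonal conjugation by $G$ on $G^n$. It then suffices, for each $n$, to exhibit a comeager orbit in $\LL^0(X,\mu,G^n)$ for this pointwise action.

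\textbf{Step 2 (the candidate set is comeager).} Using that $G$ has ample generics, fix a comeager orbit $C_n\subseteq G^n$ for the diagonal conjugation action of $G$, and a dense $G_\delta$ subset $\bigcap_k V_k\subseteq C_n$ with each $V_k$ dense open. Set
\[
\tilde C_n=\{f\in \LL^0(X,\mu,G^n) : f(x)\in C_n\text{ for }\mu\text{-a.e. }x\}.
\]
I would show $\tilde C_n\supseteq \bigcap_k\{f:f(x)\in V_k\text{ a.e.}\}$ is comeager by proving each of the latter sets is a dense $G_\delta$. For $V=G^n\setminus F$ with $F$ closed nowhere dense, the function $f\mapsto \mu(f^{-1}(F))$ is upper semicontinuous on $\LL^0$ (extract an a.e.-convergent subsequence from any sequence converging in measure and apply reverse Fatou, using that $F$ is closed), so $\{f:f(x)\notin F\text{ a.e.}\}=\bigcap_m\{f:\mu(f^{-1}(F))<1/m\}$ is $G_\delta$. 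Density follows from approximating by step functions with values in the dense open set $V$.

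\textbf{Step 3 (the candidate set is a single orbit).} Fix a basepoint $u_0\in C_n$. The continuous surjection $G\to C_n$, $h\mapsto hu_0h^{-1}$ (diagonal conjugation), admits a universally measurable section $\sigma:C_n\to G$ by Jankov--von Neumann uniformization. For any $f,g\in\tilde C_n$, the map $h:x\mapsto \sigma(g(x))\sigma(f(x))\inv$ is then measurable $X\to G$, i.e.\ an element of $\LL^0(X,\mu,G)$, and satisfies $h(x)f(x)h(x)\inv = g(x)$ almost everywhere. Hence $\tilde C_n$ is a single conjugacy class, and by Step 2 it is comeager, completing the proof.

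\textbf{Main obstacle.} The core technical issue lies in Step 2: transferring Baire category from $G^n$ to $\LL^0(X,\mu,G^n)$ through the topology of convergence in measure, which hinges on the upper semicontinuity of $f\mapsto \mu(f^{-1}(F))$ for closed $F$ and on density of step functions with prescribed values. Step 3 is a routine measurable selection argument; the only subtlety is that $\sigma$ is a priori merely universally measurable, so $x\mapsto \sigma(f(x))$ is measurable only with respect to the completion of the $\sigma$-algebra on $X$ --- harmless since $\LL^0$ identifies maps equal almost everywhere.
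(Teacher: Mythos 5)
Your proposal is correct and follows essentially the same route as the paper: identify $\LL^0(X,\mu,G)^n$ with $\LL^0(X,\mu,G^n)$, show that the functions valued a.e.\ in the comeager $G$-orbit form a comeager set by transferring openness, density and the $G_\delta$ property to $\LL^0$, and show this set is a single orbit via Jankov--von Neumann uniformization. The only (harmless) differences are cosmetic: the paper proves the slightly more general statement that comeager orbits of any continuous action carry to $\LL^0$, checks openness of the relevant sets directly from the metric rather than via upper semicontinuity of $f\mapsto\mu(f^{-1}(F))$, and uniformizes the set $\{(x,g):f(x)=g\cdot y_0\}$ fiberwise instead of taking a global universally measurable section of the orbit map.
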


This theorem generalizes the well-known fact that ample generics carry to countable powers, and the proof is essentially the same.

The second class of examples we consider comes from ergodic theory, and will yield a continuum of pairwise non-isomorphic simple contractible Polish groups with ample generics. 

A Borel bijection of the standard probability space $(X,\mu)$ is called a \textbf{non-singular automorphism} of $(X,\mu)$ if for every Borel subset $A$ of $X$, we have $\mu(A)=0$  if and only if $\mu(T(A))=0$. It is \textbf{ergodic} if every $T$-invariant Borel set has measure $0$ or $1$. Given a non-singular automorphism $T$, define its \textbf{full group} to be the group $[T]$ of all non-singular automorphisms $S$ of $(X,\mu)$ which preserve every $T$-orbit, that is, for every $x\in X$, there exists $n\in\Z$ such that $S(x)=T^n(x)$. Then $[T]$ is a Polish group for the uniform metric $d_u$ defined by $d_u(S,S'):=\mu(\{x\in X:S(x)\neq S'(x)\})$.

A non-singular ergodic automorphism of $(X,\mu)$ is called \textbf{type III} if it preserves no $\sigma$-finite measure equivalent to $\mu$.

\begin{thmi}[Thm. \ref{thm:fullgrphyphasag}]\label{thm:fullgroupag1}
Let $T$ be a type III ergodic automorphism. Then the full group $[T]$ has ample generics. 
\end{thmi}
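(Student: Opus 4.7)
The plan is to show, for every $n \in \N$, that the diagonal conjugation action of $[T]$ on $[T]^n$ admits a comeager orbit. I would follow the overall scheme familiar from the $\mathfrak S_\infty$ proof: exhibit a dense $G_\delta$ set $\mathcal D_n \subseteq [T]^n$ of ``generic'' tuples, and show via a back-and-forth construction that any two tuples in $\mathcal D_n$ are conjugate in $[T]$. The main distinction from the non-archimedean case is that $[T]$ is connected and its topology is controlled by the uniform metric $d_u$ rather than by stabilizers of finite partial permutations, so approximations must be made up to measure $\epsilon$ rather than exactly on a finite piece.

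For density, I would use the density of periodic elements in $[T]$, together with a Rokhlin-type lemma for the non-singular ergodic $\Z$-action generated by $T$. A natural candidate for $\mathcal D_n$ is the set of tuples $(S_1,\ldots,S_n)$ for which the generated subgroup $\la S_1,\ldots,S_n\ra$ acts with finite orbits almost everywhere and realizes any prescribed finite orbit pattern on arbitrarily fine partitions of $X$. Each combinatorial pattern is an open condition in $d_u$, and density at each level follows from cutting and reassembling Rokhlin towers for $T$, exploiting that in the type III setting one has complete freedom in prescribing the Radon--Nikodym data along the tower.

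Given two tuples in $\mathcal D_n$, I would construct the conjugacy $\varphi \in [T]$ by back-and-forth on a nested sequence of finite Boolean subalgebras $\mathcal A_1 \subseteq \mathcal A_2 \subseteq \cdots$ on each side, approximately invariant under the respective tuples. At each stage one matches the combinatorics of the two sides by a partial non-singular isomorphism sitting inside the $T$-groupoid, and extends it at the next stage using the genericity of the tuples; since every partial isomorphism lies in the full groupoid of $T$, the $d_u$-Cauchy limit is an honest element of $[T]$ that conjugates the two tuples.

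The principal obstacle is controlling the Radon--Nikodym cocycles. Each $S_i$ carries a measurable cocycle $\rho_i = \log(d\mu\circ S_i/d\mu)$, and conjugation inside $[T]$ only modifies $\rho_i$ by an additive $[T]$-coboundary, so the joint distribution of the $\rho_i$ with the orbit combinatorics is a potential obstruction to matching two tuples. This is precisely where the type III hypothesis becomes essential: by Krieger's theory, the ratio set of $T$ is rich enough that any prescribed cocycle profile can be realized as a coboundary at each back-and-forth stage, which fails in the measure-preserving case and is exactly the reason ample generics is absent there. Verifying that this flexibility can be made compatible with the combinatorial extensions of the previous step, uniformly in the finite approximations, is where I expect the bulk of the technical work to lie.
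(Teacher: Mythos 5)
Your high-level skeleton (a dense $G_\delta$ set of finite-orbit tuples realizing all patterns, conjugated using type III flexibility) matches the paper's, but the central step — that any two generic tuples are diagonally conjugate — is not actually carried out, and the mechanism you propose for it is incorrect. You locate the obstruction in the Radon--Nikodym cocycles $\rho_i$ and claim that in type III the ratio set is rich enough that ``any prescribed cocycle profile can be realized as a coboundary''; but for type III$_\lambda$ with $0<\lambda<1$ the ratio set is only $\{0\}\cup\lambda^{\Z}\cup\{\infty\}$, and for III$_0$ it is $\{0,1,\infty\}$, so that literal statement fails while the theorem covers all type III automorphisms. In fact the cocycle is not the relevant invariant: conjugation by an arbitrary non-singular element of $[T]$ imposes no cocycle constraint one needs to fight. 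What type III actually buys — isolated in the paper as Proposition \ref{prop:transactiononmalg} — is that any two positive-measure sets are equivalent via a partial isomorphism in $[[\mathcal R]]$, which kills the one genuine invariant, namely the measures of the sets of points whose orbit under the tuple carries a given transitive finite pattern (exactly the obstruction in the measure-preserving case, cf. the remark after Lemma \ref{lem:singleconjclass}). With that in hand no back-and-forth or $d_u$-Cauchy-limit construction is needed: one partitions $X$ on both sides by orbit pattern, matches transversals of corresponding pieces by Proposition \ref{prop:transactiononmalg}, and corrects inside each piece using Lemma \ref{lem:conjugate in S_p} via the $\LL^0(X,\mu,\mathfrak S_\infty)$ picture. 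Your plan defers precisely this to ``the bulk of the technical work'', so the comeager-class statement remains unproved.

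There is also a gap in the density argument for $n\geq 2$, together with a topological subtlety. Density of periodic elements and Rokhlin's lemma approximate a \emph{single} transformation; for an $n$-tuple you need the group generated by the $n$ approximants to have finite orbits, i.e.\ the tuple must be approximated \emph{simultaneously} inside one finite subrelation. This is exactly where hyperfiniteness of $\mathcal R_T$ and the non-singular version of Kittrell--Tsankov's density of $\bigcup_k[\mathcal R_k]$ enter in the paper (and why the Rokhlin shortcut is only claimed for $n=1$ in Section \ref{sec:furthrem}); approximating each coordinate separately by a periodic element gives no control on joint orbits. One must moreover install, on a small invariant set, a tuple whose restriction realizes every transitive finite pattern infinitely often in almost every class — the paper's Lemma \ref{lem: Sinfini dans [R]}, again resting on Proposition \ref{prop:transactiononmalg}. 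Finally, your claim that the combinatorial conditions are open in $d_u$ is not immediate in the non-singular setting: a perturbation supported on a small set can spread along orbits with uncontrolled measure, since there is no invariant measure giving Markov-type bounds; the paper sidesteps this by embedding $[\mathcal R]$ continuously into $\LL^0(X,\mu,\mathfrak S_\infty)$ (Proposition \ref{prop:fullgroupsubl0}) and pulling back the $G_\delta$ set $E_n\subseteq\mathfrak S_\infty^n$ of Lemma \ref{lem:gdeltasinfini}.
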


Full groups of type III ergodic automorphisms are contractible, as was shown by Danilenko in \cite{MR1367676}. Besides, Eigen (\cite{MR654590}) has proved that they are simple. Therefore, the preceding theorem provides examples of simple contractible Polish groups with ample generics, as opposed to the highly non-simple groups we obtain with Theorem \ref{thm: ample generics carry to L0}. Also note that, by a result of Miller (\cite[Thm. 8.1]{miller2004full}), these full groups satisfy the Bergman property.
 
Furthermore, Dye's reconstruction theorem is true in the type III setting: every abstract isomorphism between full groups of type III automorphisms\footnote{More generally, of type III equivalence relations (see Section  \ref{sec:typeIII} for a definition).} comes from an orbit equivalence\footnote{An \textbf{orbit equivalence} between $T$ and $T'$ is a non-singular automorphism of $(X,\mu)$ which maps $T$-orbits to $T'$-orbits bijectively. It is easily checked that $S$ is an orbit equivalence between $T$ and $T'$ if and only if $S[T]S\inv=[T']$.} (for a general theorem from which this statement follows, see 
\cite[384D]{MR2459668}). In particular, using Krieger's classification of type III ergodic automorphisms up to orbit equivalence into types III$_0$, III$_\lambda$ ($0<\lambda<1$) and III$_1$ \cite{MR0240279}, we see that Theorem  \ref{thm:fullgrphyphasag} provides a continuum of pairwise non-isomorphic simple contractible Polish groups with ample generics. 

\section{The space of measurable maps \texorpdfstring{$\LL^0(X,\mu,Y)$}{L0(X,mu,Y)}}

In this section, we recall the definition of the Polish space $\LL^0(X,\mu,Y)$ and establish a few basic lemmas on its topology.

By definition, a probability space $(X,\mu)$ is \textbf{standard} if it is isomorphic to the interval $[0,1]$ endowed with the Lebesgue measure. Throughout the paper, $(X,\mu)$ will denote a standard probability space.

\begin{df}Let $Y$ be a Polish space. Then $\LL^0(X,\mu,Y)$ is the set of Lebesgue-measurable maps from $X$ to $Y$ up to equality $\mu$-almost everywhere. 
\end{df}

We endow $\LL^0(X,\mu,Y)$ with the \textbf{topology of convergence in measure}, which says that two maps are close in this topology if they are uniformly close on a set of large measure. To be more precise, fix a compatible metric $d_Y$ on $Y$. Then for $\epsilon>0$ and $f\in\LL^0(X,\mu,G)$, let 
$$V_{\epsilon}(f):=\{g\in \LL^0(X,\mu,Y): \mu(\{x\in X: d(f(x),g(x))<\epsilon\})>1-\epsilon\}.$$
The topology generated by the family $\left(V_{\epsilon}(f)\right)_{\epsilon,f}$  is the topology of convergence in measure.

This topology does not depend on the choice of a compatible metric $d_Y$ on $Y$ (\cite[Cor. of Prop. 6]{MR0414775}). Moreover, it is Polish (\cite[Prop. 7]{MR0414775}) and contractible: assume $X=[0,1]$ and let $y_0\in Y$ be an arbitrary point. Then an explicit contraction path is given by $$\begin{array}{rll}\LL^0(X,\mu,Y)\times[0,1] & \to & \LL^0(X,\mu,Y)  \\(f,t) & \mapsto & f_t:x\mapsto\left\{\begin{array}{cl}y_0 & \text{if }x>t, \\f(x) & \text{otherwise.}\end{array}\right.\end{array}$$

The following lemma is an easy consequence of the definition of the topology of convergence in measure.

\begin{lem}\label{lem:mugreatepsopen}
Let $Y$ be a Polish space, and let $U$ be an open subset of $Y$. Then for every $\epsilon>0$, the set 
$$V_{U,\epsilon}:=\{f\in \LL^0(X,\mu,Y): \mu(\{x\in X: f(x)\in U\})>1-\epsilon\}$$
is open in $\LL^0(X,\mu,Y)$.
\end{lem}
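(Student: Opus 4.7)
The plan is to prove openness directly: take an arbitrary $f\in V_{U,\eps}$ and exhibit a basic open neighborhood $V_\eta(f)$ of $f$ contained in $V_{U,\eps}$. Write $\alpha:=\mu(f\inv(U))-(1-\eps)$, which is strictly positive by assumption; this $\alpha$ will be the ``slack'' we play with.

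The key idea is that since $U$ is open, every point $y\in U$ has positive distance $d_Y(y,Y\setminus U)$ to the complement. So I would exhaust $f\inv(U)$ by the increasing sequence
$$A_n:=\{x\in X:d_Y(f(x),Y\setminus U)>1/n\},$$
which clearly satisfies $\bigcup_n A_n=f\inv(U)$. By upward continuity of the measure, I can pick $n_0$ large enough that $\mu(A_{n_0})>\mu(f\inv(U))-\alpha/3=1-\eps+2\alpha/3$.

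Now I set $\eta:=\min(1/n_0,\alpha/3)$ and claim $V_\eta(f)\subseteq V_{U,\eps}$. Indeed, for $g\in V_\eta(f)$, the set $B:=\{x\in X:d_Y(f(x),g(x))<\eta\}$ has measure $>1-\eta\geq1-\alpha/3$. On the intersection $A_{n_0}\cap B$, the triangle inequality forces $g(x)\in U$ because the distance from $g(x)$ to $Y\setminus U$ is at least $d_Y(f(x),Y\setminus U)-d_Y(f(x),g(x))>1/n_0-\eta\geq 0$. A quick inclusion–exclusion gives
$$\mu(A_{n_0}\cap B)\geq\mu(A_{n_0})+\mu(B)-1>(1-\eps+2\alpha/3)+(1-\alpha/3)-1=1-\eps+\alpha/3>1-\eps,$$
so $\mu(g\inv(U))>1-\eps$ and $g\in V_{U,\eps}$, as required.

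There is no real obstacle; the only subtlety is bookkeeping the two small parameters so that the final strict inequality $>1-\eps$ is preserved (one loses a bit of measure on the $A_n$-approximation step, and a bit more from the $V_\eta(f)$ definition, so the slack $\alpha$ has to be split). Everything else — the use of openness of $U$, the exhaustion, and the triangle inequality — is completely routine.
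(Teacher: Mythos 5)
Your proof is correct and follows essentially the same route as the paper's: exhaust $f\inv(U)$ by the sets $A_n=\{x: d_Y(f(x),Y\setminus U)>1/n\}$, choose $n$ so that $\mu(A_n)$ exceeds $1-\eps$ with room to spare, and then pick the radius of the basic neighborhood $V_\eta(f)$ smaller than both $1/n$ and the remaining measure slack so the triangle inequality pushes $g$ into $U$ on a set of measure $>1-\eps$. The only difference is cosmetic bookkeeping (your explicit $\alpha/3$ splitting and inclusion--exclusion versus the paper's choice of $\delta$), so there is nothing to change.
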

\begin{proof}
Let $f\in V_{U,\epsilon}$ and let $A=\{x\in X: f(x)\in U\}$. Fix a compatible metric $d_Y$ on $Y$. Since $U$ is open, the set $A$ can be written as the increasing union of the sets $A_n$'s, where $A_n=\{x\in A: d_Y(f(x), Y\setminus U)>\frac 1n\}$. By assumption, the set $A$ has measure greater than $1 - \eps$, so we can find $N\in\N$ such that $\mu(A_N)>1-\epsilon$. Now, if $\delta$ is a positive real such that $\delta<\frac 1N$ and $\delta<\mu(A_N)-(1-\epsilon)$, we see that $V_\delta(f)\subseteq V_{U,\epsilon}$, hence $V_{U,\epsilon}$ is open.
\end{proof}

Given a subset $B$ of $Y$, let 
$$\LL^0(X,\mu;B):=\{f\in \LL^0(X,\mu,Y): \forall x\in X, f(x)\in B\}.$$
A subset of a topological space is called $G_\delta$ if it can be written as a countable intersection of open sets. 

\begin{lem}\label{lem:gdeltaallinG}
Let $Y$ be a Polish space, and let $B$ be a $G_\delta$ subset of $Y$. Then the set
$\LL^0(X,\mu;B)$
is a $G_\delta$ in $\LL^0(X,\mu,Y)$. \end{lem}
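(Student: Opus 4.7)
The plan is to reduce to the case where the target is open, and then use Lemma \ref{lem:mugreatepsopen}. Write $B = \bigcap_{n\in\N} U_n$ as a countable intersection of open subsets $U_n$ of $Y$. Since a countable intersection of $G_\delta$ sets is again $G_\delta$, it suffices to show that $\LL^0(X,\mu;B)$ is the intersection of the sets $\LL^0(X,\mu;U_n)$ and that each $\LL^0(X,\mu;U_n)$ is $G_\delta$.

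For the intersection formula, I would argue that an $f\in \LL^0(X,\mu,Y)$ has a representative with values everywhere in $B$ if and only if, for every $n$, it has a representative with values everywhere in $U_n$. The forward direction is immediate. For the converse, note that $\mu(\{x:f(x)\in B\}) = \mu\left(\bigcap_n\{x:f(x)\in U_n\}\right) = 1$ once each individual set has measure $1$; then, assuming $B$ is nonempty (the empty case being trivial), picking any $y_0\in B$ and redefining $f$ to be $y_0$ on the null set where it misses $B$ provides the desired representative.

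For each open $U\subseteq Y$, I would show
$$\LL^0(X,\mu;U)=\bigcap_{k\geq 1}V_{U,1/k},$$
which follows from the observation that $\mu(\{x:f(x)\in U\})=1$ if and only if this measure exceeds $1-1/k$ for every $k\in\N^*$. By Lemma \ref{lem:mugreatepsopen}, each $V_{U,1/k}$ is open, so $\LL^0(X,\mu;U)$ is $G_\delta$. Combining this with the previous paragraph yields the result.

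There is no real obstacle here. The only step requiring a moment of care is the passage between the "everywhere in $B$" formulation used to define $\LL^0(X,\mu;B)$ and the "$\mu$-almost everywhere in $B$" formulation that behaves well with respect to the topology of convergence in measure; this is handled by the null-set modification above.
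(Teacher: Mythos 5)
Your proof is correct and follows essentially the same route as the paper: write $B=\bigcap_n U_n$, observe $\LL^0(X,\mu;B)=\bigcap_n \LL^0(X,\mu;U_n)$, and express each $\LL^0(X,\mu;U_n)$ as a countable intersection of the open sets $V_{U_n,\epsilon}$ from Lemma \ref{lem:mugreatepsopen}. The only difference is that you spell out the null-set modification reconciling the ``everywhere'' and ``almost everywhere'' formulations, a point the paper leaves implicit.
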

\begin{proof}
Write $B=\bigcap_{n\in\N} U_n$ where each $U_n$ is open. Then clearly $\LL^0(X,\mu;B) =\bigcap_{n\in\N} \LL^0(X,\mu;U_n)$. Now, $\LL^0(X,\mu;U_n)=\bigcap_{k\in\N} V_{U_n,2^{-k}}$, so it is $G_\delta$ by the previous lemma, so $\LL^0(X,\mu;B$ itself is $G_\delta$. 
\end{proof}

\begin{lem}\label{lem:densityL0}
Let $Y$ be a Polish space, and let $B$ be dense subset of $Y$, then $\LL^0(X,\mu;B)$ is dense in $\LL^0(X,\mu;Y)$.
\end{lem}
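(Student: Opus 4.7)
The plan is to approximate any $f\in\LL^0(X,\mu,Y)$ by a simple function taking finitely many values, then replace each value by a nearby element of $B$. Since the sets $V_\epsilon(f)$ form a basis of neighborhoods of $f$, density amounts to producing, for any fixed $\epsilon>0$, an element $g\in\LL^0(X,\mu;B)$ with $\mu(\{x:d_Y(f(x),g(x))<\epsilon\})>1-\epsilon$.

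First I would fix a compatible metric $d_Y$ and use the separability of $Y$ to cover it by countably many open balls $(B(y_n,\epsilon/2))_{n\in\N}$. Pulling back to $X$ via $f$ and disjointifying yields a measurable partition $X=\bigsqcup_{n\in\N} A_n$ such that $f(A_n)\subseteq B(y_n,\epsilon/2)$ for every $n$. By $\sigma$-additivity, one can choose $N$ large enough that $\mu(\bigcup_{n\leq N}A_n)>1-\epsilon$.

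Next, using the density of $B$ in $Y$, pick for each $n\leq N$ a point $b_n\in B$ with $d_Y(y_n,b_n)<\epsilon/2$, and fix an arbitrary $b_0\in B$ (assuming $B\neq\emptyset$; the statement is trivial otherwise since then $Y=\emptyset$). Define
\[
g(x)=\begin{cases} b_n & \text{if } x\in A_n \text{ with } n\leq N,\\ b_0 & \text{otherwise.}\end{cases}
\]
Then $g$ is measurable (it is a simple function) and takes values in $B$, so $g\in\LL^0(X,\mu;B)$. On $\bigcup_{n\leq N}A_n$, the triangle inequality gives $d_Y(f(x),g(x))\leq d_Y(f(x),y_n)+d_Y(y_n,b_n)<\epsilon$, so the set where $d_Y(f,g)<\epsilon$ has measure strictly greater than $1-\epsilon$, which means $g\in V_\epsilon(f)$.

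There is essentially no obstacle here: the argument is the standard simple-function approximation, and the only subtlety is making sure that the approximating simple function actually takes its values in $B$, which is exactly where the density of $B$ comes in. The proof does not require any property of $B$ beyond density (in particular $B$ need not be Borel), because $g$ is built directly as a simple function with finitely many prescribed values in $B$.
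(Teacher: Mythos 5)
Your proof is correct, and it is in the same spirit as the paper's (separability plus density, then a measurable selection of nearby points), but the construction differs slightly. The paper takes a countable dense subset of $B$ itself (any subspace of a separable metric space is separable), and for each $x$ lets $g(x)$ be the first point of that countable set within $\epsilon$ of $f(x)$; this produces a countably-valued $g\in\LL^0(X,\mu;B)$ with $d_Y(f(x),g(x))<\epsilon$ for \emph{every} $x\in X$, with no truncation and no $\epsilon/2$ bookkeeping. You instead run the standard simple-function approximation: partition $X$ using an $\epsilon/2$-cover of $Y$ by balls centered at a countable dense subset of $Y$, truncate to finitely many pieces of total measure $>1-\epsilon$, and then replace each center by a nearby point of $B$, getting closeness only on a large-measure set -- which is still exactly what membership in $V_\epsilon(f)$ requires. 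Your version has the small bonus that the approximant takes only finitely many values (so it even shows $B$-valued simple functions are dense), at the cost of the two-step approximation and the truncation; the paper's version is shorter and yields the slightly stronger uniform estimate on all of $X$. Your handling of the degenerate case $B=\emptyset$ and the measurability of the simple function are both fine, so there is no gap.
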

\begin{proof}
Fix a compatible metric $d_Y$ on $Y$. Since $Y$ is separable, we can find a countable subset of $B$ which is still dense in $Y$: in other words, we may as well assume that $B$ is countable. Enumerate $B$ as $\{y_n\}_{n\in\N}$, and fix $\epsilon>0$ as well as a function $f\in \LL^0(X,\mu, Y)$. For every $x\in X$, let $n(x)$ be the smallest integer $n\in\N$ such that $d_Y(f(x),y_n)<\epsilon$. It is easily checked that the map $x\mapsto n(x)$ is measurable, so that the function $g:x\mapsto y_{n(x)}$ belongs to $\LL^0(X,\mu; B)$. But by construction, we actually have $d_Y(f(x),g(x))<\epsilon$ for \textit{all} $x\in X$, and in particular $g\in V_\epsilon(f)$, which completes the proof.
\end{proof}

Recall that a subset $B$ of a Polish space $Y$ is comeager if it contains a dense $G_\delta$ set. What follows is an immediate consequence of the previous two lemmas.

\begin{lem}\label{lem:comeagerL^0}Let $Y$ be a Polish space and let $B$ be a comeager subset of $Y$. Then $\LL^0(X,\mu;B)$ is a comeager subset of $\LL^0(X,\mu;Y)$.\qed
\end{lem}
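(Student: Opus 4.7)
The plan is to unpack the definition of comeagerness and then apply the previous two lemmas directly, as the author hints. Since $B$ is comeager in $Y$, by definition it contains a dense $G_\delta$ subset $B'\subseteq B$. I would then show that $\LL^0(X,\mu;B')$ is a dense $G_\delta$ subset of $\LL^0(X,\mu;Y)$, from which the conclusion follows because $\LL^0(X,\mu;B')\subseteq \LL^0(X,\mu;B)$.

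First, I would apply Lemma \ref{lem:gdeltaallinG} to $B'$: since $B'$ is $G_\delta$ in $Y$, the set $\LL^0(X,\mu;B')$ is $G_\delta$ in $\LL^0(X,\mu;Y)$. Next, I would apply Lemma \ref{lem:densityL0} to $B'$: since $B'$ is dense in $Y$, the set $\LL^0(X,\mu;B')$ is dense in $\LL^0(X,\mu;Y)$. Combining these two facts, $\LL^0(X,\mu;B')$ is a dense $G_\delta$, hence comeager.

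Finally, from $B'\subseteq B$ I would conclude $\LL^0(X,\mu;B')\subseteq \LL^0(X,\mu;B)$, so $\LL^0(X,\mu;B)$ contains a comeager set and is therefore comeager itself. There is no real obstacle here; the only subtlety worth flagging is that one must pass from the comeager set $B$ to an explicitly chosen dense $G_\delta$ subset $B'$ before invoking the lemmas, since Lemmas \ref{lem:gdeltaallinG} and \ref{lem:densityL0} are phrased respectively for $G_\delta$ and dense subsets rather than for comeager subsets directly.
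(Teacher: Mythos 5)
Your argument is correct and is exactly the intended one: the paper leaves the proof implicit (``an immediate consequence of the previous two lemmas''), and your passage to a dense $G_\delta$ set $B'\subseteq B$, followed by Lemmas \ref{lem:gdeltaallinG} and \ref{lem:densityL0} and the inclusion $\LL^0(X,\mu;B')\subseteq\LL^0(X,\mu;B)$, is precisely that argument.
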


\section{\texorpdfstring{$\LL^0(X,\mu,G)$}{L0(X,mu,G)} has ample generics whenever \texorpdfstring{$G$}{G} does}

\begin{thm}\label{thm:les orbites comaigres passent aux L^0}
Let $G$ be a Polish group acting continuously on a Polish space $Y$. 
If the action of $G$ on $Y$ has a comeager orbit, then so does the action of $\LL^0(X,\mu; G)$ on $\LL^0(X,\mu;Y)$. 
\end{thm}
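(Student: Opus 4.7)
The plan is to exhibit a comeager orbit explicitly. Fix $y_0 \in Y$ with comeager $G$-orbit and let $f_0 \in \LL^0(X,\mu,Y)$ denote the constant map at $y_0$. I claim that the orbit of $f_0$ under the pointwise action of $\LL^0(X,\mu,G)$ (i.e.\ $(g\cdot f)(x) := g(x)\cdot f(x)$) coincides, as a subset of $\LL^0(X,\mu,Y)$, with $\LL^0(X,\mu; G\cdot y_0)$. Granting this, Lemma \ref{lem:comeagerL^0} delivers the theorem, since $G\cdot y_0$ is comeager in $Y$ by hypothesis.

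One inclusion is immediate from the formula $(g\cdot f_0)(x) = g(x)\cdot y_0 \in G\cdot y_0$. For the reverse inclusion, given $f \in \LL^0(X,\mu; G\cdot y_0)$ I must produce a measurable $g : X \to G$ with $g(x)\cdot y_0 = f(x)$ almost everywhere. This is a measurable selection problem: the set
\[ R := \{(x,h) \in X \times G : h\cdot y_0 = f(x)\} \]
is Borel, being the preimage of the diagonal of $Y \times Y$ under the map $(x,h) \mapsto (f(x), h\cdot y_0)$, which is measurable in $x$ and continuous in $h$. Moreover the projection of $R$ to $X$ has full $\mu$-measure by the hypothesis $f(x) \in G\cdot y_0$ a.e.

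The only non-trivial step is to extract from $R$ a measurable section $g$; this is where I expect the real work to sit. It can be handled by invoking the Jankov--von Neumann uniformization theorem: $R$ admits a universally measurable uniformization $g: X \to G$, and on the standard probability space $(X,\mu)$ every such map agrees $\mu$-almost everywhere with a Borel map, yielding $g \in \LL^0(X,\mu,G)$ with $g\cdot f_0 = f$. An alternative, slightly heavier route would invoke Effros's theorem (the comeagerness of $G\cdot y_0$ forces it to be $G_\delta$ and the bijection $G/\mathrm{Stab}(y_0) \to G\cdot y_0$ to be a homeomorphism) together with the standard Borel cross-section theorem for quotients of Polish groups by closed subgroups, but the uniformization argument is more direct and requires no structural information about the orbit.
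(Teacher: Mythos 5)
Your proposal is correct and follows essentially the same route as the paper: identify the orbit of the constant map $\overline{y_0}$ with $\LL^0(X,\mu;G\cdot y_0)$ via a Jankov--von Neumann uniformization of $\{(x,h): h\cdot y_0=f(x)\}$, then conclude by Lemma \ref{lem:comeagerL^0}. The only (minor) point to make explicit is that before claiming this set is Borel you should replace $f$ by a Borel representative agreeing with it $\mu$-almost everywhere, exactly as the paper does.
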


\begin{proof}
Let $y_0$ be an element of $Y$ whose orbit is comeager; let $\overline{y_0}$ be the corresponding constant function in $\LL^0(X,\mu; Y)$. We show that the orbit of $\overline{y_0}$ in $\LL^0(X,\mu; Y)$ is comeager. 

First, let us remark that the orbit of $\overline{y_0}$ is thus described:
\begin{equation}\label{eqn:orbit}
\LL^0(X,\mu; G) \cdot \overline{y_0} = \{f \in \LL^0(X,\mu;Y) : f(x) \in G \cdot y_0 \text{ for almost all $x\in X$}\}.
\end{equation}
Indeed, if $f\in\LL^0(X,\mu,Y)$ is in the orbit of $\overline{y_0}$, then $f$  clearly belongs to the above set. Conversely, assume that $f(x)\in G \cdot y_0$ for almost all $x\in X$. Since every Lebesgue-measurable map coincides with a Borel map on a full measure set, we may as well assume that $f$ is a Borel map. We can then change $f$ on a Borel set of measure $0$ so that $f(x)\in G \cdot y_0$ for every $x\in X$. For all $x\in X$, there exists an element $g_x$ in $G$ such that $f(x) =g_x \cdot y_0$. We would like to find those $g_x$'s in a measurable way. To this end, we apply the Jankov-von Neumann uniformization theorem (see \cite[Cor. 5.5.8]{MR1619545}) to the following Borel set
$$S = \{(x, g) \in X \times G :  f(x) = g \cdot y_0 \},$$
which projects to the whole space $X$. We thus obtain a Lebesgue-measurable map $\varphi\in\LL^0(X,\mu; G)$ whose graph is contained in $S$, that is, $f = \varphi \cdot \overline{y_0}$, hence $f$ belongs to the $\LL^0(X,\mu,G)$-orbit of $\overline{y_0}$. 

Now, using the notations introduced at the end of the previous section, we may rewrite Equation (\ref{eqn:orbit}) as
$$\LL^0(X,\mu; G) \cdot \overline{y_0}=\LL^0(X,\mu;G\cdot y_0).$$
Since $G\cdot y_0$ is comeager, so is the set $\LL^0(X,\mu;G\cdot y_0)$ by Lemma \ref{lem:comeagerL^0}. Thus, the orbit of $\overline y_0$ is comeager in $\LL^0(X,\mu;Y)$.
\end{proof}

Recall that a topological group $G$ has \textbf{ample generics} if for all $n\in\N$, the diagonal conjugacy action of $G$ on $G^n$ has a comeager orbit, where the diagonal conjugacy action is given by $g\cdot (g_1,...,g_n)=(gg_1g\inv,...,gg_ng\inv)$.

\begin{cor}\label{cor:embedinample}
Let $G$ be a Polish group with ample generics. Then the group $\LL^0(X,\mu; G)$ also has ample generics. 
\end{cor}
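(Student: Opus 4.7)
The plan is to deduce this directly from Theorem \ref{thm:les orbites comaigres passent aux L^0} applied to the diagonal conjugacy action of $G$ on $Y = G^n$. What needs justification is essentially a bookkeeping step: identifying $\LL^0(X,\mu;G)^n$ with $\LL^0(X,\mu;G^n)$ in a way that intertwines the two conjugacy actions.

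First, I would observe that the natural map
$$(f_1,\ldots,f_n) \longmapsto \bigl(x\mapsto (f_1(x),\ldots,f_n(x))\bigr)$$
is a homeomorphism from $\LL^0(X,\mu;G)^n$ onto $\LL^0(X,\mu;G^n)$. Indeed, if $d_G$ is a compatible metric on $G$, then $d_{G^n}((x_i),(y_i)):=\max_i d_G(x_i,y_i)$ is compatible with the product topology on $G^n$, and convergence in $d_{G^n}$-measure of $(f_1^{(k)},\ldots,f_n^{(k)})$ to $(f_1,\ldots,f_n)$ is equivalent to convergence in $d_G$-measure in each coordinate, which is exactly the product topology on $\LL^0(X,\mu;G)^n$.

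Second, I would check that under this identification, the diagonal conjugacy action of $\LL^0(X,\mu;G)$ on $\LL^0(X,\mu;G)^n$ corresponds to the $\LL^0$-lift (in the sense of Theorem \ref{thm:les orbites comaigres passent aux L^0}) of the continuous diagonal conjugacy action of $G$ on $G^n$. This is immediate from the pointwise definition: for $\varphi\in\LL^0(X,\mu;G)$, the element $\varphi\cdot(f_1,\ldots,f_n) = (\varphi f_1\varphi\inv,\ldots,\varphi f_n\varphi\inv)$ evaluated at $x$ gives $\varphi(x)\cdot(f_1(x),\ldots,f_n(x))$, which is exactly $\varphi(x)$ acting diagonally by conjugation on $G^n$.

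Finally, since $G$ has ample generics, the diagonal conjugacy action of $G$ on $G^n$ is continuous and has a comeager orbit. Applying Theorem \ref{thm:les orbites comaigres passent aux L^0} with $Y=G^n$ yields a comeager orbit for the action of $\LL^0(X,\mu;G)$ on $\LL^0(X,\mu;G^n)$, hence via the identification above, a comeager orbit for the diagonal conjugacy action of $\LL^0(X,\mu;G)$ on its $n$-th power. As $n$ was arbitrary, $\LL^0(X,\mu;G)$ has ample generics. No serious obstacle is expected here: the only thing to take some care with is the compatibility of the topologies and the actions, which is routine once the correct identification is written down.
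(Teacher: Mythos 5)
Your proposal is correct and follows exactly the paper's own argument: identify $\LL^0(X,\mu;G)^n$ with $\LL^0(X,\mu;G^n)$ and apply Theorem \ref{thm:les orbites comaigres passent aux L^0} to the diagonal conjugacy action of $G$ on $G^n$. The paper states this identification without further detail, so your verification of the homeomorphism and equivariance is just a more explicit write-up of the same proof.
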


\begin{proof}
Fix $n\in\N$. There is a natural identification between the groups $\LL^0(X,\mu;G)^n$ and $\LL^0(X,\mu;G^n)$, so it suffices to apply Theorem \ref{thm:les orbites comaigres passent aux L^0} to the diagonal action of $G$ on $G^n$.
\end{proof}

Since $G$ embeds into the contractible group $\LL^0(X,\mu; G)$ via constant maps, we also have the following result.

\begin{cor}\label{cor:embedinample2}
Let $G$ be a Polish group with ample generics. Then $G$ embeds into a contractible Polish group with ample generics.  \qed
\end{cor}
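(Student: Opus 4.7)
The plan is to show that $\LL^0(X,\mu;G)$ is the witness: it has ample generics by Corollary~\ref{cor:embedinample}, it is contractible by the explicit contraction path $(f,t)\mapsto f_t$ exhibited in Section~1, and it contains a topological copy of $G$ via the constant maps. So the only thing to verify is that the constant-map inclusion $\iota:G\to \LL^0(X,\mu;G)$, $g\mapsto \overline g$, is a topological group embedding.

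First I would check that $\iota$ is a group homomorphism; this is immediate since the group operation on $\LL^0(X,\mu;G)$ is pointwise. Next, continuity of $\iota$ follows directly from the definition of the topology of convergence in measure: if $g_n\to g$ in $G$, then for any compatible metric $d_G$ and any $\eps>0$ we eventually have $d_G(g_n,g)<\eps$, whence $\overline{g_n}\in V_\eps(\overline g)$. For the continuity of $\iota^{-1}$ on $\iota(G)$, I would observe that if $\overline{g_n}\to \overline g$ in $\LL^0(X,\mu;G)$, then for each $\eps>0$ the set $\{x:d_G(g_n,g)<\eps\}$ has measure greater than $1-\eps$ for $n$ large enough; since $g_n$ and $g$ are constants, this set is either all of $X$ or empty, so in fact $d_G(g_n,g)<\eps$ for large $n$, and thus $g_n\to g$ in $G$.

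With $\iota$ established as a topological group embedding, the corollary follows: $G$ embeds into $\LL^0(X,\mu;G)$, which is a contractible Polish group with ample generics.

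I do not expect any real obstacle here; the content has been done in Corollary~\ref{cor:embedinample} and in the discussion of $\LL^0(X,\mu;Y)$. The present statement is just the synthesis, and the only tiny check is the one about constant sequences above, which distinguishes the subspace topology on $\iota(G)$ from a potentially coarser convergence-in-measure topology.
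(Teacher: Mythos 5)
Your proposal is correct and follows the paper's own route: the witness is $\LL^0(X,\mu;G)$, which is contractible, has ample generics by Corollary~\ref{cor:embedinample}, and contains $G$ via constant maps. The only difference is that you spell out the (correct, easy) verification that the constant-map inclusion is a topological group embedding, which the paper leaves implicit.
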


\section{Type III full groups}\label{sec:typeIII}

We begin this section by recasting the definitions from the introduction in the more general setting of non-singular equivalence relations.

Two Borel probability measures are \textbf{equivalent} if they have the same null sets.

A Borel bijection of the standard probability space $(X,\mu)$ is \textbf{non-singular} if the probability measure $T_*\mu$ defined by $T_*\mu(A)=\mu(T\inv(A))$ is equivalent to $\mu$. Similarly, a Borel action of a countable group $\Gamma$ on $(X,\mu)$ is called non-singular if all the elements of $\Gamma$ define non-singular  Borel bijections of $(X,\mu)$.

A Borel equivalence relation on $X$ is called \textbf{countable} if all its classes are countable. For instance, if $\Gamma$ is a countable group acting on $X$ in a Borel manner, define the equivalence relation $\mathcal R_{\Gamma}$ by $$(x,y)\in\mathcal R_\Gamma\iff x\in\Gamma\cdot y.$$  Then $\mathcal R_\Gamma$ is a countable Borel equivalence relation. In fact, by a result of Feldman and Moore \cite{MR0578656}, every countable Borel equivalence relation arises this way: for every countable Borel equivalence relation $\mathcal R$, there exists a countable group $\Gamma$ and a Borel action of $\Gamma$ on $X$ such that $\mathcal R=\mathcal R_{\Gamma}$. 

Define the \textbf{pre-full group} of a countable Borel equivalence relation $\mathcal R$ on $(X,\mu)$ to be the group of all Borel bijections $T:X\to X$ such that for all $x\in X$, $(x,T(x))\in\mathcal R$.

\begin{df}A countable Borel equivalence relation $\mathcal R$ on $(X,\mu)$ is \textbf{non-singular} if every element of its pre-full group is non-singular. 
\end{df}

It is well-known that a countable Borel equivalence relation $\mathcal R$ on $(X,\mu)$ is non-singular if and only if it comes from a non-singular action of a countable group (see e.g. \cite[Prop. 8.1]{MR2095154}).

The \textbf{full group} $[\mathcal R]$ of a non-singular equivalence relation $\mathcal R$ is by definition the quotient of its pre-full group by the normal subgroup consisting of all elements whose support have measure zero\footnote{Note that the fact that this is a normal subgroup is a consequence of the fact that $\mathcal R$ is non-singular.}. Equivalently, one can see the full group as a subgroup of $\Aut^*(X,\mu)$, where $\Aut^*(X,\mu)$ denotes the group of all non-singular Borel bijections of $(X,\mu)$ up to equality on a full measure set.

Let $\mathcal R$ and $\mathcal S$ be two non-singular equivalence relations. Then a map $T\in\Aut^*(X,\mu)$ is said to induce an \textbf{orbit equivalence} between $\mathcal R$ and $\mathcal S$ if it maps bijectively almost every $\mathcal R$-orbit to an $\mathcal S$-orbit, that is, if for almost all $x\in X$,
$$T([x]_{\mathcal R})=[T(x)]_{\mathcal S}.$$
It is easy to see that $T\in\Aut^*(X,\mu)$ induces an orbit equivalence between $\mathcal R$ and $\mathcal S$ if and only if $T[\mathcal R]T\inv=[\mathcal S]$. Let us now give the most basic invariants of orbit equivalence.

A non-singular equivalence relation $\mathcal R$  on $(X,\mu)$ is \textbf{ergodic} if every Borel set $A\subseteq X$ that is a reunion of $\mathcal R$-classes has measure $0$ or $1$. If $\mathcal R$ is an ergodic non-singular equivalence relation, we say that\begin{itemize}
\item $\mathcal R$ is \textbf{type II$_1$} if there exists a probability measure $\nu$ equivalent to $\mu$ which is preserved by $\mathcal R$, that is, such that every element $T$ of the pre-full group of $\mathcal R$ satisfies $T_*\nu=\nu$;
\item $\mathcal R$ is \textbf{type II$_\infty$} if there exists a $\sigma$-finite measure $\nu$ equivalent to $\mu$ which is preserved by $\mathcal R$;
\item $\mathcal R$ is \textbf{type III} otherwise.
\end{itemize}
These cases are mutually exclusive, and here, we will focus on type III ergodic equivalence relations. Let us give one more definition.

\begin{df}The \textbf{pseudo-full group} $[[\mathcal R]]$ of a non-singular equivalence relation $\mathcal R$ is the set of all injective Borel maps $\varphi$ from $X$ to $X$ such that for all $x\in\dom\varphi$, one has $(x,\varphi(x))\in\mathcal R$.
\end{df}

The following proposition is folklore; since we could not find a precise reference in the literature, we provide a proof. 

\begin{prop}\label{prop:transactiononmalg}Let $\mathcal R$ be a type III ergodic equivalence relation on $(X,\mu)$. Then for every Borel subsets $A,B\subseteq X$ of positive measure, there exists $\varphi\in[[\mathcal R]]$ such that $\dom\varphi=A$ and $\rng\varphi=B$.
\end{prop}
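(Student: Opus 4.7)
My plan is to produce the desired $\varphi$ by a Zorn-style exhaustion argument and then use the type III hypothesis to close any residual gap.

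\textbf{Step 1: partial matchings from ergodicity.} For any two positive-measure Borel subsets $C, D \subseteq X$, I would first show there exists $\varphi \in [[\mathcal R]]$ with $\dom\varphi \subseteq C$, $\rng\varphi \subseteq D$, both of positive measure. By the Feldman--Moore theorem, write $\mathcal R = \bigcup_n \mathrm{graph}(T_n)$ with non-singular Borel bijections $T_n$ of $(X,\mu)$; ergodicity implies that the saturation $\bigcup_n T_n(C)$ has full measure, so $\mu(D \cap T_n(C)) > 0$ for some $n$, and the restriction of $T_n^{-1}$ to $D \cap T_n(C)$ is the desired $\varphi$.

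\textbf{Step 2: Zorn.} Next, I would apply Zorn's lemma to the poset of $\varphi \in [[\mathcal R]]$ with $\dom\varphi \subseteq A$ and $\rng\varphi \subseteq B$ (ordered by extension) to produce a maximal element $\varphi$. Chains have at most countable cofinality since the measures of domains sum to at most $1$, so Zorn applies. By Step 1 applied to $A \setminus \dom\varphi$ and $B \setminus \rng\varphi$, maximality forces at least one of these two sets to be null; after exchanging $A \leftrightarrow B$ and replacing $\varphi$ by $\varphi^{-1}$ if necessary, I may assume $\dom\varphi = A$ modulo null sets, so that $\varphi$ is a bijection from $A$ onto some $B_0 \subseteq B$.

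\textbf{Step 3: type III closes the gap.} It remains to show $B \setminus B_0$ is null. Here the type III hypothesis enters via the following standard characterization: an ergodic non-singular equivalence relation $\mathcal R$ is type III if and only if every positive-measure set $E \subseteq X$ is compressible, meaning there exists $\sigma \in [[\mathcal R]]$ with $\dom\sigma = E$ and $\rng\sigma \subsetneq E$ of positive measure complement; iterating gives that $E$ is $[[\mathcal R]]$-equivalent to subsets of arbitrarily small measure. Supposing $B_1 := B \setminus B_0$ has positive measure, I would compress $B_0$ within itself via such a $\sigma$ to shrink the image of $\varphi$ (replacing $\varphi$ by $\sigma\circ\varphi$), use Step 1 to match a piece of the freed-up subset $B_0 \setminus \sigma(B_0)$ with a piece of $B_1$, and combine these to strictly enlarge $\rng\varphi$ while keeping $\dom\varphi = A$. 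Running a parallel Zorn argument from the $B$-side to obtain an embedding $B \hookrightarrow A$ in $[[\mathcal R]]$, I would then invoke a Cantor--Schröder--Bernstein argument internal to $[[\mathcal R]]$ (which goes through because all pieces involved are in $[[\mathcal R]]$ and their inverses are too) to assemble the two one-sided embeddings into a bijection $A \to B$ in $[[\mathcal R]]$.

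\textbf{Main obstacle.} The delicate point is Step 3: one must carefully orchestrate the compression-and-rerouting with the CSB step so as to genuinely close the gap and not lose ground. Without type III, the proposition fails outright: in the type II$_1$ case the invariant probability forces $\mu(\dom\varphi) = \mu(\rng\varphi)$ for $\varphi \in [[\mathcal R]]$, so sets of different invariant measure cannot be matched. Thus the compressibility characterization is doing essential work, and the proof is folklore precisely because the individual ingredients (Feldman--Moore, Zorn, the compressibility characterization, and CSB in $[[\mathcal R]]$) are classical, while putting them together requires care.
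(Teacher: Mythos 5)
Your Steps 1 and 2 are fine (ergodicity does give partial matchings, and the exhaustion argument works in the measure algebra), but all the content of the proposition sits in Step 3, and as written it does not close. First, the ``standard characterization'' you invoke --- type III if and only if every positive-measure set is compressible --- has as its nontrivial direction essentially the statement you are trying to prove: in the equivalence-relation setting this is precisely the folklore fact for which the paper says it could find no reference, and it is not a definitional remark. The paper obtains the needed input by applying the Hajian--Ito weakly wandering set theorem to the restricted relation $\mathcal R_{\restriction B}$, after first proving (its Lemma on restrictions) that the restriction of a type III relation to a positive-measure set is still type III; this restriction step is essential, since in the type II$_\infty$ case restrictions to sets of finite invariant measure are II$_1$ and admit no wandering behaviour. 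So citing compressibility of every set as known outsources the crux rather than proving it. Moreover your parenthetical claim that ``iterating gives that $E$ is $[[\mathcal R]]$-equivalent to subsets of arbitrarily small measure'' is unjustified: iterating a single compression $\sigma$ only shows that the gaps $\sigma^n(E)\setminus\sigma^{n+1}(E)$ form infinitely many pairwise disjoint equivalent copies of $E\setminus\sigma(E)$ inside $E$; it does not produce small copies of $E$ itself.

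Second, the gap-closing mechanics fail. After replacing $\varphi$ by $\sigma\circ\varphi$ the range becomes $\sigma(B_0)$, and since elements of $[[\mathcal R]]$ are only non-singular, rerouting a piece of it into $B_1$ gives no guarantee that $\mu(\rng\varphi)$ has increased at all; the new map is not an extension of the old one, so it contradicts no maximality, and even a sequence of maps with strictly increasing ranges has no reason to converge or to exhaust $B$. Likewise the ``parallel Zorn argument from the $B$-side'' only reproduces the dichotomy ($A$ embeds into $B$ \emph{or} $B$ embeds into $A$); it cannot be forced to output the missing direction, so the Schr\"oder--Bernstein step never receives both embeddings. The repair is to prove the one-sided statement $A\prec_{\mathcal R}B$ for \emph{all} positive-measure $A,B$ in one shot, which is exactly what the paper does: from a weakly wandering set for $\mathcal R_{\restriction B}$ (equivalently, from your compression of $B$) one gets a positive-measure $W\subseteq B$ with infinitely many pairwise disjoint $[[\mathcal R]]$-equivalent copies inside $B$; by ergodicity, $A$ can be partitioned into countably many pieces each of which embeds into $W$ (via the Feldman--Moore generators, as in your Step 1); gluing gives $A\prec_{\mathcal R}B$, and the Borel Schr\"oder--Bernstein theorem then produces $\varphi\in[[\mathcal R]]$ with $\dom\varphi=A$ and $\rng\varphi=B$. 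With this argument your Zorn step becomes unnecessary.
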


The proof follows \cite[Lem. 6]{MR0369658} closely, where the same statement is proved in the case when $\mathcal R$ is generated by a single type III  ergodic automorphism of $(X,\mu)$. We need the following lemma, which corresponds to \cite[Lem. 3]{MR0369658}.

\begin{lem}\label{lem:restriction type III}
Let $\mathcal R$ be a type III ergodic equivalence relation on $(X,\mu)$. Then for every Borel subset $A$ of $X$ with positive measure, the restriction of $\mathcal R$ to $A$ is type III.
\end{lem}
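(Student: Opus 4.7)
The proof will proceed by contraposition. Suppose $\mathcal R|_A$ is not type III; then there exists a $\sigma$-finite measure $\nu_A$ on $A$, equivalent to $\mu|_A$, which is preserved by every element of the pre-full group of $\mathcal R|_A$. The plan is to spread $\nu_A$ back over $X$ through pieces of the pseudo-full group of $\mathcal R$, producing an $\mathcal R$-invariant $\sigma$-finite measure on $X$ equivalent to $\mu$, contradicting the assumption that $\mathcal R$ is type III.

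First, by the Feldman--Moore theorem, write $\mathcal R = \mathcal R_\Gamma$ for a countable group $\Gamma$ acting in a non-singular Borel manner on $(X,\mu)$; enumerate $\Gamma = \{\gamma_n\}_{n\in\N}$. Since $\mathcal R$ is ergodic and $\mu(A)>0$, the saturation $[A]_{\mathcal R} = \bigcup_n \gamma_n\inv(A)$ is $\mu$-conull. Form a Borel partition of $[A]_{\mathcal R}$ (modulo a null set) by setting $C_0 = A$ and $C_n = \gamma_n\inv(A) \setminus (C_0\cup\dots\cup C_{n-1})$ for $n\geq 1$. Each restriction $\varphi_n := \gamma_n|_{C_n}$ is an injective Borel map into $A$ whose graph is contained in $\mathcal R$, hence $\varphi_n\in[[\mathcal R]]$.

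Now define a measure $\nu$ on $X$ by $\nu(B) := \sum_{n\in\N} \nu_A(\varphi_n(B\cap C_n))$. Since $\nu_A$ is $\sigma$-finite, covering $A$ by countably many Borel sets of finite $\nu_A$-measure and pulling back through the $\varphi_n$ shows $\nu$ is $\sigma$-finite. Since each $\varphi_n$ is a non-singular Borel bijection between $C_n$ and $\varphi_n(C_n)\subseteq A$, and $\nu_A$ is equivalent to $\mu|_A$, the measure $\nu$ is equivalent to $\mu$ on each $C_n$, hence on all of $X$.

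It remains to verify that $\nu$ is $\mathcal R$-invariant, which is the main point. Let $T$ be an element of the pre-full group of $\mathcal R$; it suffices to show $T_*\nu = \nu$ on each piece $C_n\cap T\inv(C_m)$. On such a piece, consider the composition $\psi_{n,m} := \varphi_m\circ T\circ \varphi_n\inv$, defined on $\varphi_n(C_n\cap T\inv(C_m))\subseteq A$ with image in $A$. Since $\varphi_n,\varphi_m\in[[\mathcal R]]$ and $T$ is in the pre-full group of $\mathcal R$, the map $\psi_{n,m}$ belongs to the pseudo-full group of $\mathcal R|_A$, hence preserves $\nu_A$. Translating this invariance through the definition of $\nu$ yields $\nu(T\inv(D)\cap C_n\cap T\inv(C_m)) = \nu(D\cap C_m\cap T(C_n))$ for every Borel $D$, and summing over $n,m$ gives $T_*\nu = \nu$. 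Thus $\nu$ is a $\sigma$-finite $\mathcal R$-invariant measure equivalent to $\mu$, contradicting the type III assumption. The main delicate point is keeping the bookkeeping of the pieces $C_n\cap T\inv(C_m)$ straight so that invariance under every pre-full-group element is reduced to invariance of $\nu_A$ under $\mathcal R|_A$.
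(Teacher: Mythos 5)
Your proof is correct and follows essentially the same route as the paper's: you push the $\sigma$-finite invariant measure on $A$ out to a conull subset of $X$ via countably many partial isomorphisms of $[[\mathcal R]]$ returning to $A$ (your sets $C_n=\gamma_n\inv(A)\setminus(C_0\cup\dots\cup C_{n-1})$ play the role of the paper's partition $(B_i)$ of $X\setminus A$ with $\gamma_i(B_i)\subseteq A$), obtaining a $\sigma$-finite $\mathcal R$-invariant measure equivalent to $\mu$ and hence a contradiction with type III. Your verification of invariance via the maps $\psi_{n,m}=\varphi_m\circ T\circ\varphi_n\inv$ is just a more explicit version of what the paper leaves to the reader; the only step you tacitly use is the standard fact that invariance under the pre-full group of $\mathcal R_{\restriction A}$ passes to its pseudo-full group, which is folklore and harmless.
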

\begin{proof}
Let $\Gamma$ be a countable group such that $\mathcal R=\mathcal R_\Gamma$. Then by ergodicity, one can find a partition $(B_i)_{i\in\N}$ of $X\setminus A$ such that for every $i\in\N$, there exists $\gamma_i\in\Gamma$ such that $\gamma_i(B_i)\subseteq A$. Suppose that the restriction of $\mathcal R$ to $A$ is not type III: then it preserves a $\sigma$-finite measure $\nu$ equivalent to the restriction of $\mu$ to $A$. We then extend $\nu$ to $X$ via the $\gamma_{i}$'s: for every Borel subset $B$ of $X$, let 
$$\eta(B)=\nu(A\cap B)+\sum_{i\in\N} \nu(\gamma_i(B_i\cap B)).$$
The $\sigma$-finite measure $\eta$ we obtain is $\mathcal R$-invariant and equivalent to $\mu$, contradicting the fact that $\mathcal R$ was type III.
\end{proof}

\begin{proof}[Proof of Proposition \ref{prop:transactiononmalg}]
Let $\mathcal R$ be a type III ergodic equivalence relation on $(X,\mu)$. We introduce the following notation: for two Borel subsets $A,B\subseteq X$, write $A\prec_{\mathcal R} B$ if there exists $\varphi\in[[\mathcal R]]$ whose domain is equal to $A$ while its range is included in $B$.  By the Borel version of the Schröder-Bernstein theorem (see \cite[Thm. 15.7]{MR1321597}), we see that if $A\prec_{\mathcal R} B$ and $B\prec_{\mathcal R} A$, then there exists $\varphi\in[[\mathcal R]]$ whose domain is equal $A$ and whose range is actually equal to $B$. So by symmetry, the proof boils down to showing that for any $A,B\subseteq X$ of positive measure, one has $A\prec_{\mathcal R} B$. 

To this end, consider the restriction $\mathcal R_{\restriction B}$ of $\mathcal R$ to $B$: by the previous lemma, it is type III. Let $\Gamma$ be a countable group acting on $B$ such that $\mathcal R_{\restriction B}=\mathcal R_\Gamma$. Applying \cite[Thm. 1]{MR0240283} to the $\Gamma$-action on $B$, one finds a positive measure subset $W\subseteq B$ and an infinite subset $I\subseteq \Gamma$ such that for all $\gamma\neq\gamma'\in I$, the sets $\gamma(W)$ and $\gamma'(W)$ are disjoint.

Since $\mathcal R$ is ergodic, we can find a partition $(A_\gamma)_{\gamma\in I}$ of $A$ such that for every $\gamma\in I$ one has $A_\gamma\prec_{\mathcal R} W$. But then for all $\gamma\in I$, we have $A_\gamma\prec_{\mathcal R}\gamma(W)$, and since $(\gamma(W))_{\gamma\in I}$ is a partition of a subset of $B$, we deduce that $A\prec_{\mathcal R}B$.
\end{proof}

\section{Full groups as closed subgroups of \texorpdfstring{$\LL^0(X,\mu,\mathfrak S_\infty)$}{L0(X,mu,S infinity)}}

This section is devoted to the proof of Theorem \ref{thm:fullgroupag1}: we want to exhibit examples of full groups with ample generics. Let us first give some background on the group $\mathfrak S_\infty$ of all permutations of the set $\N$. It is equipped with the topology of pointwise convergence, which means that a sequence $(\sigma_n)$ of permutations converges to $\sigma$ if for every $k\in\N$, we have $\sigma_n(k)=\sigma(k)$ for $n$ large enough. An explicit basis of neighborhoods of the identity is provided by the sets $$\{\sigma\in\mathfrak S_\infty: \sigma_{\restriction\{0,...,N\}}=\mathrm{id}_{\{0,...,N\}}\}$$ where $N\in\N$. Our proof of Theorem \ref{thm:fullgroupag1} nicely parallels the proof of the fact that $\mathfrak S_\infty$ has ample generics, which was first shown by Hodges, Hodkinson, Lascar and Shelah (\cite{zbMATH00540578}). First, we need to define the topology on full groups.

Given a non-singular equivalence relation $\mathcal R$, endow its full group $[\mathcal R]$ with the uniform topology, induced by the metric $d_u$ defined by
$$d_u(S,T):=\mu(\{x\in X:S(x)\neq T(x)\}).$$
Then $[\mathcal R]$ is a Polish group.

An equivalence relation is called \textbf{aperiodic} if all its equivalence classes are infinite.
By an easy modification of the Lusin-Novikov theorem (see \cite[Ex. 18.15]{MR1321597}), if $\mathcal R$ is a aperiodic countable Borel equivalence relation, then there exists a countable family of Borel maps $(f_i)_{i\in\N}$ from $X$ to $X$ with \textit{disjoint} graphs, where $f_0$ is the identity map, such that 
$$\mathcal R=\bigsqcup_{i\in\N}\{(x,f_i(x)): x\in X\}.$$
Such a family $(f_i)_{i\in\N}$ is called a \textbf{decomposition} of $\mathcal R$. Note that if  $\mathcal R$ is moreover non-singular, then for all $i\in\N$, the pushforward measure $f_{i*}\mu$ is absolutely continuous with respect to $\mu$.

The following proposition clarifies the link between full groups and $\mathfrak S_\infty$. We will use this connection throughout the lemmas leading to Theorem \ref{thm:fullgroupag1}.

\begin{prop}\label{prop:fullgroupsubl0}Let $\mathcal R$ be a non-singular aperiodic equivalence relation on $(X,\mu)$. Let $(f_i)_{i\in\N}$ be a decomposition of $\mathcal R$. Then the function
$$\Phi: [\mathcal R]\to \LL^0(X,\mu,\mathfrak S_\infty)$$
which maps every $T\in[\mathcal R]$ to the measurable function $$\Phi(T):x\mapsto (i\in\N\mapsto \text{ the unique }j\in\N\text{ such that }T(f_i(x))=f_j(x))$$
is an embedding.
\end{prop}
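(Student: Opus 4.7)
The plan is to verify in turn that $\Phi$ is well-defined (including measurability), a group homomorphism, injective, and a homeomorphism onto its image. The proof follows the structure of a routine verification, with the only substantive input being the non-singularity of $\mathcal R$.

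For well-definedness, fix $T\in[\mathcal R]$. The defining properties of a decomposition imply that for every $x\in X$, the map $i\mapsto f_i(x)$ is a bijection from $\N$ onto the $\mathcal R$-class of $x$ (surjectivity from $\mathcal R=\bigsqcup_i\mathrm{graph}(f_i)$, injectivity from the disjointness of the graphs). Since $T\in[\mathcal R]$ preserves this class bijectively, $\Phi(T)(x)$ is a genuine permutation of $\N$. Borel measurability of $x\mapsto\Phi(T)(x)$ into $\mathfrak S_\infty$ reduces to the measurability of the coordinates $x\mapsto\Phi(T)(x)(i)$, which is immediate from $\{x:\Phi(T)(x)(i)=j\}=\{x:T(f_i(x))=f_j(x)\}$. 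A similar argument using that each $f_{i*}\mu$ is absolutely continuous with respect to $\mu$ (by non-singularity of $\mathcal R$) shows that replacing $T$ by a representative that agrees almost everywhere does not change the class of $\Phi(T)$, so $\Phi$ descends to $[\mathcal R]$.

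The homomorphism property $\Phi(TS)=\Phi(T)\Phi(S)$ follows from a direct chase: if $S(f_i(x))=f_k(x)$ and $T(f_k(x))=f_j(x)$, then $TS(f_i(x))=f_j(x)$, i.e.\ $\Phi(TS)(x)(i)=\Phi(T)(x)(\Phi(S)(x)(i))$. For injectivity, since $f_0=\mathrm{id}$, the value $\Phi(T)(x)(0)$ is the unique $j$ with $T(x)=f_j(x)$, and $T(x)$ can be recovered from this, so $\Phi(T)=\Phi(S)$ forces $T=S$ in $[\mathcal R]$.

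The key step is bicontinuity. Fix a compatible metric on $\mathfrak S_\infty$ for which $d(\sigma,\tau)<\epsilon$ forces pointwise agreement on some initial segment $\{0,\ldots,N(\epsilon)\}$ (with $N(\epsilon)\geq 0$ for small $\epsilon$). The easy direction is continuity of $\Phi^{-1}$ on the image: convergence of $\Phi(T_n)$ to $\Phi(T)$ in measure implies convergence of the $0$-th coordinates in measure, and the set $\{x:\Phi(T_n)(x)(0)\neq\Phi(T)(x)(0)\}$ equals exactly $\{x:T_n(x)\neq T(x)\}$, so $d_u(T_n,T)\to 0$. The opposite direction is where the non-singularity of $\mathcal R$ enters: if $A_n=\{y:T_n(y)\neq T(y)\}$ has $\mu(A_n)\to 0$, then for each fixed $N$, the set on which $\Phi(T_n)(x)$ and $\Phi(T)(x)$ fail to agree on $\{0,\ldots,N\}$ is $\bigcup_{i\leq N}f_i^{-1}(A_n)$, which has measure at most $\sum_{i\leq N}f_{i*}\mu(A_n)$; since each $f_{i*}\mu\ll\mu$ and the union is finite, this tends to $0$, which is exactly the content of $\Phi(T_n)\to\Phi(T)$ in measure. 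The mild obstacle here is the need to control a \emph{finite} union only — an infinite union would fail without uniform absolute continuity, which is precisely why the argument runs through coordinate-by-coordinate in $\mathfrak S_\infty$.
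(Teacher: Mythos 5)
Your proof is correct and follows essentially the same route as the paper: injectivity and the continuity of $\Phi^{-1}$ come from the $0$-th coordinate (equivalently, the paper's observation that $\Phi^{-1}(U_{\epsilon,0})$ is the set of $T$ with $\mu(\supp T)<\epsilon$), and continuity of $\Phi$ comes from the absolute continuity $f_{i*}\mu\ll\mu$ applied to a finite set of coordinates, exactly as in the paper's argument with the neighborhoods $U_{\epsilon,N}$. The only difference is that you spell out the routine verifications (well-definedness, homomorphism) and argue at arbitrary points rather than just at the identity, which changes nothing of substance.
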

\begin{proof}
It is easy to check that $\Phi$ is a well-defined group homomorphism. Next, we show that $\Phi$ is continuous. To this end, let $\epsilon>0$ and $N\in\N$ and consider the following basic open neighborhood of the identity in $\LL^0(X,\mu,\mathfrak S_\infty)$:
$$U_{\epsilon,N}=\left\{f\in\LL^0(X,\mu,\mathfrak S_\infty): \mu(\{x\in X: f(x)_{\restriction\{0,...,N\}}=\mathrm{id}_{\{0,...,N\}}\}) > 1 - \epsilon \right\}.$$
Let $(T_n)$ converge to the identity map of $X$ and for each $n \in \N$, let $A_n=X\setminus\supp T_n$. Then $\mu(A_n)$ tends to $1$ so for all $i\in\{0,...,N\}$, we have $\mu(f_i\inv(A_n))\to 0$. Thus, for $n$ large enough, we have 
$\mu(\bigcap_{i=0}^Nf_i\inv(A_n))>1-\epsilon$, so that $\Phi(T_n)\in U_{\epsilon,N}$. This means that $\Phi$ is continuous.

Moreover, since $\Phi\inv(U_{\epsilon,0})$ consists precisely of those $T\in[\mathcal R]$ whose support has measure less than $\epsilon$, we see that $\Phi$ is injective and is a homeomorphism onto its image, completing the proof. 
\end{proof}

\begin{rmq}
The above proof amounts to identifying $X\times \N$ and $\mathcal R$ via $(x,i)\mapsto (x,f_i(x))$. The right-action $\rho$ of $[\mathcal R]$ on $\mathcal R$ given by $\rho(T)(x,y)=(x,T(y))$ provides the embedding $\Phi:[\mathcal R]\to \LL^0(X,\mu,\mathfrak S_\infty)$ of the previous theorem. 
\end{rmq}

In what follows, we say that an $n$-tuple of permutations $(\sigma_1,...,\sigma_n)$ of a set $X$ and an $n$-tuple of permutations $(\tau_1,...,\tau_n)$ of a set $Y$ are \textbf{conjugate} if there exists a bijection $\sigma: X\to Y$ such that for all $i\in\{1,...,n\}$, we have $\sigma\sigma_i\sigma\inv=\tau_i$.

\begin{lem}\label{lem:gdeltasinfini} Let $E_n$ denote the set of $n$-tuples $(\sigma_1,...,\sigma_n)\in\mathfrak S_\infty^n$ such that the following two conditions are satisfied:
\begin{enumerate}[(1)]
\item \label{cond:allorbitsfinite}every $\la \sigma_1,...,\sigma_n\ra$-orbit is finite, and
\item \label{cond:manyorbitsconjtoalpha} for every transitive\footnote{By definition, an $n$-tuple $(\tau_1,...,\tau_n)$ of permutations of a finite set is \textbf{transitive} if the associated $\la \tau_1,...,\tau_n\ra$ action is transitive, i.e. has only one orbit.} $n$-tuple $(\tau_1,...,\tau_n)$ of permutations of a finite set, there exists infinitely many $\la \sigma_1,...,\sigma_n\ra$-orbits such that the restriction of $(\sigma_1,...,\sigma_n)$ to each of them is conjugate to $(\tau_1,...,\tau_n)$.
\end{enumerate}
Then $E_n$ is $G_\delta$ in $\mathfrak S_\infty^n$. 
\end{lem}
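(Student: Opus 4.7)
The plan is to show separately that each of conditions (\ref{cond:allorbitsfinite}) and (\ref{cond:manyorbitsconjtoalpha}) defines a $G_\delta$ subset of $\mathfrak S_\infty^n$, so that their intersection $E_n$ is $G_\delta$. The whole argument rests on the observation that on $\mathfrak S_\infty$ with the pointwise convergence topology, for any $k,l\in\N$ the set $\{\sigma:\sigma(k)=l\}$ is clopen, and consequently for any finite $F\subseteq\N$ the condition ``$\sigma(F)\subseteq F$'' is clopen. Since $F$ is finite and $\sigma$ is a bijection, this is moreover equivalent to $\sigma(F)=F$.

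For condition (\ref{cond:allorbitsfinite}), I would prove that for each $k\in\N$ the set $A_k:=\{\bar\sigma\in\mathfrak S_\infty^n:\la\sigma_1,\ldots,\sigma_n\ra\cdot k\text{ is finite}\}$ is in fact \emph{open}. Indeed, $A_k$ equals the union, over all finite sets $F\ni k$, of the clopen sets $\{\bar\sigma:\sigma_i(F)\subseteq F\text{ for each }i\leq n\}$: the existence of such an invariant $F$ forces the orbit of $k$ to lie inside $F$, and conversely when the orbit is finite, the orbit itself witnesses openness. As this is a countable union of clopen sets, $A_k$ is open, and condition (\ref{cond:allorbitsfinite}) cuts out $\bigcap_{k\in\N}A_k$, which is $G_\delta$. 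The potentially misleading point here, and the only real subtlety, is that orbit-finiteness for a single $k$ looks at first sight like an $F_\sigma$ condition, whereas in this topology it is actually open.

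For condition (\ref{cond:manyorbitsconjtoalpha}), enumerate (there are only countably many up to conjugacy) representatives $(\bar\tau^{(m)})_{m\in\N}$ of transitive $n$-tuples of permutations of finite sets. Condition (\ref{cond:manyorbitsconjtoalpha}) is equivalent to requiring that for all $m,N\in\N$, the $n$-tuple $\bar\sigma$ has at least $N$ distinct $\la\sigma_1,\ldots,\sigma_n\ra$-orbits on which its restriction is conjugate to $\bar\tau^{(m)}$. For fixed $m$ and $N$, I would express the set of such $\bar\sigma$ as the union, over all $N$-tuples of pairwise disjoint finite subsets $F_1,\ldots,F_N\subseteq\N$, of the clopen condition stating that each $F_j$ is $\bar\sigma$-invariant, that $\bar\sigma|_{F_j}$ is transitive on $F_j$, and that $\bar\sigma|_{F_j}$ is conjugate to $\bar\tau^{(m)}$; each of these three requirements only depends on the values of the $\sigma_i$'s on the finite set $F_1\cup\cdots\cup F_N$. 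This is a countable union of clopen sets, hence open, and intersecting over $m$ and $N$ yields a $G_\delta$ set. Combining both contributions, $E_n$ is the intersection of two $G_\delta$ sets and hence $G_\delta$; no real obstacle is expected beyond the openness observation underlying condition (\ref{cond:allorbitsfinite}).
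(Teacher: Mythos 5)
Your proof is correct and follows essentially the same route as the paper: both arguments rest on the observation that orbit-finiteness of a fixed point and ``conjugate to a fixed finite transitive tuple on an invariant finite set'' are open (indeed determined by finitely many values), and then express $E_n$ as a countable intersection of such open sets. The only difference is bookkeeping for ``infinitely many'': the paper takes $\bigcap_N\bigcup_{k\geq N}C_k$ over base points $k$, while you quantify over $N$-tuples of pairwise disjoint finite invariant witnesses, and the two are plainly equivalent.
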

\begin{proof}
For a fixed $k\in\N$, consider an $n$-tuple $(\sigma_1,...,\sigma_n)\in\mathfrak S_\infty^n$ such that the $\la \sigma_1,...,\sigma_n\ra$-orbit of $k$ is finite. Let $F$ denote this orbit. If an $n$-tuple $(\tau_1,...,\tau_n)\in\mathfrak S_\infty^n$ has the same restriction to $F$ as $(\sigma_1,...\sigma_n)$, then the $\la \tau_1,...,\tau_n\ra$-orbit of $k$ is also equal to $F$, so in particular it is finite. Thus, the set of $n$-tuples $(\sigma_1,...,\sigma_n)\in\mathfrak S_\infty^n$ for which the orbit of $k$ is finite is an open subset of $\mathfrak S_\infty^n$.

Since all the $\la\sigma_1,...,\sigma_n\ra$-orbits are finite if and only if for all $k\in\N$, the $\la \sigma_1,...,\sigma_n\ra$-orbit of $k$ is finite, condition (\ref{cond:allorbitsfinite}) defines a $G_\delta$ subset of $\mathfrak S_\infty^n$. 

To complete the proof, we now need to see why condition (\ref{cond:manyorbitsconjtoalpha}) also defines a $G_\delta$ subset of $\mathfrak S_\infty^n$. 

 Up to conjugation, there are countably many $n$-tuples of permutations of a finite set. Moreover, since a countable intersection of $G_\delta$ sets is $G_\delta$, we only need to see that for a fixed transitive $n$-tuple $(\tau_1,...,\tau_n)$ of permutations of a finite set, the following condition on $(\sigma_1,...,\sigma_n)\in\mathfrak S_\infty^n$ defines a $G_\delta$ set: there are infinitely many $k\in\N$ such that the restriction of $(\sigma_1,...,\sigma_n)$ to the $\la\sigma_1,...,\sigma_n\ra$-orbit of $k$  is conjugate to $(\tau_1,...,\tau_n)$. 

For a fixed $k\in\N$, the same reasoning as for condition (\ref{cond:allorbitsfinite}) yields that the following set $C_k$ is open: the set of $(\sigma_1,...,\sigma_n)\in\mathfrak S_\infty^n$ such that the restriction of $(\sigma_1,...,\sigma_n)$ to the orbit of $k$ is conjugate to $(\tau_1,...,\tau_n)$. We deduce the set of $(\sigma_1,...,\sigma_n)$ such that there are infinitely many $k\in\N$ with $(\sigma_1,...,\sigma_n)\in C_k$ is $G_\delta$, which completes the proof. 
\end{proof}

The following lemma mirrors Lemma \ref{lem:gdeltasinfini}.

\begin{lem}\label{lem:fngdelta}
Let $\mathcal R$ be an aperiodic non-singular equivalence relation on $(X,\mu)$. Let $\mathcal E_n$ denote the set of $n$-tuples $(T_1,...,T_n)\in[\mathcal R]^n$ such that the following two conditions are satisfied:
\begin{enumerate}[(1)]
\item \label{cond:allorbitsfinite2}almost every $\la T_1,...,T_n\ra$-orbit is finite, and
\item \label{cond:manyorbitsconjtoalpha2} for every transitive $n$-tuple $(\tau_1,...,\tau_n)$ of permutations of a finite set and almost every $x\in X$, there are infinitely many $y\in[x]_\mathcal R$ such that the restriction of $(T_1,...,T_n)$ to the $\la T_1,...,T_n\ra$-orbit of $y$ is conjugate to $(\tau_1,...,\tau_n)$.
\end{enumerate}
Then $\mathcal E_n$ is $G_\delta$ in $[\mathcal R]^n$. 
\end{lem}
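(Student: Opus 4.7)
The plan is to transport the $G_\delta$-ness of $E_n\subseteq\mathfrak S_\infty^n$ established in Lemma \ref{lem:gdeltasinfini} through the embedding of Proposition \ref{prop:fullgroupsubl0}. Fix a decomposition $(f_i)_{i\in\N}$ of $\mathcal R$, and let
$$\Phi^n\colon [\mathcal R]^n\to\LL^0(X,\mu,\mathfrak S_\infty)^n\cong\LL^0(X,\mu,\mathfrak S_\infty^n)$$
be the product of the embedding $\Phi$ of Proposition \ref{prop:fullgroupsubl0}; it is continuous. The heart of the argument is the identification
$$\mathcal E_n=(\Phi^n)\inv\bigl(\LL^0(X,\mu;E_n)\bigr).$$
Granting this, Lemma \ref{lem:gdeltasinfini} gives that $E_n$ is $G_\delta$ in $\mathfrak S_\infty^n$, Lemma \ref{lem:gdeltaallinG} then ensures that $\LL^0(X,\mu;E_n)$ is $G_\delta$ in $\LL^0(X,\mu,\mathfrak S_\infty^n)$, and continuity of $\Phi^n$ transports the $G_\delta$ property to $\mathcal E_n$.

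To justify the identification, recall that for every $x\in X$ the map $j\mapsto f_j(x)$ is a bijection from $\N$ onto $[x]_{\mathcal R}$, and by the very definition of $\Phi$ it conjugates the permutation $\Phi(T_k)(x)\in\mathfrak S_\infty$ with the restriction of $T_k$ to $[x]_{\mathcal R}$. Consequently, the orbits of $\langle \Phi(T_1)(x),\ldots,\Phi(T_n)(x)\rangle$ in $\N$ correspond bijectively, together with their combinatorial structure, to the $\langle T_1,\ldots,T_n\rangle$-orbits inside $[x]_{\mathcal R}$. Condition (\ref{cond:manyorbitsconjtoalpha2}) of Lemma \ref{lem:fngdelta} thus translates verbatim into condition (\ref{cond:manyorbitsconjtoalpha}) of Lemma \ref{lem:gdeltasinfini} holding pointwise almost everywhere for $\Phi^n(T_1,\ldots,T_n)$.

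The step I expect to need a touch of care is condition (\ref{cond:allorbitsfinite2}). A priori it only states that the set $A=\{x:|\langle T_1,\ldots,T_n\rangle\cdot x|<\infty\}$ is conull, whereas the pointwise condition on $\Phi^n(T_1,\ldots,T_n)$ demands that for a.e. $x$, \emph{every} $\langle T_1,\ldots,T_n\rangle$-orbit contained in $[x]_{\mathcal R}$ be finite. These two are equivalent, however: the complement of the latter set is exactly the $\mathcal R$-saturation of the null set $X\setminus A$, and $\mathcal R$-saturations of null sets are null because $\mathcal R$ is countable and non-singular (realize $\mathcal R$ as the orbit equivalence relation of a countable non-singular group action, and use that a countable union of null sets is null). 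With this equivalence in place the displayed identification is immediate, and the lemma follows.
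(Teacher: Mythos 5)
Your proof is correct and follows essentially the same route as the paper's: pull back the $G_\delta$ set $\LL^0(X,\mu;E_n)$ (Lemmas \ref{lem:gdeltasinfini} and \ref{lem:gdeltaallinG}) through the continuous map $(\Phi,\dots,\Phi)$ of Proposition \ref{prop:fullgroupsubl0} via the identification $\mathcal E_n=(\Phi^n)\inv(\LL^0(X,\mu;E_n))$. The only difference is that you spell out the verification of this identification (in particular the saturation argument reconciling ``almost every orbit finite'' with the pointwise condition on classes), which the paper states without proof.
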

\begin{proof}
The set $E_n$ of Lemma \ref{lem:gdeltasinfini} is $G_\delta$ in $\mathfrak S_\infty^n$, so by Lemma \ref{lem:gdeltaallinG}, we have that $\LL^0(X,\mu;E_n)$ is a $G_\delta$ subset of $\LL^0(X,\mu;\mathfrak S_\infty^n)$. 
Through the natural identification between $\LL^0(X,\mu,\mathfrak S_\infty^n)$ and $\LL^0(X,\mu,\mathfrak S_\infty)^n$, we then see $\LL^0(X,\mu,E_n)$ as a $G_\delta$ subset of $\LL^0(X,\mu,\mathfrak S_\infty)^n$. 

But then, using the notations of Proposition \ref{prop:fullgroupsubl0}, we have that $$\mathcal E_n=(\Phi,...,\Phi)\inv(\LL^0(X,\mu;E_n)).$$ Since $\Phi$ is continuous and $E_n$ is $G_\delta$, we deduce that $\mathcal E_n$ is also $G_\delta$.
\end{proof}

\begin{rmq}We will see during the proof of Theorem \ref{thm:fullgroupag1} that when $\mathcal R$ is hyperfinite, $\mathcal E_n$ is a dense $G_\delta$ set for all $n\geq 1$ (we prove it only in the type III case, but it is true in general). This actually characterizes hyperfiniteness as a consequence of a result of Eisenmann and Glasner which was a great source of inspiration to us (see \cite[Thm. 1.6 (2)]{Eisenmann:2014oq}). 
\end{rmq}

The next lemma is a bit technical to state, and will only be used to show that $\mathcal E_n$ consists of a single conjugacy class in the type III case (Lemma \ref{lem:singleconjclass}). 

\begin{lem}\label{lem:conjugate in S_p} Let $p\in\N$, consider the product action of the cyclic group $\Z/p\Z$ on the space $(\Z/p\Z\times A, m\times \nu)$, where $\Z/p\Z$ acts on itself by translation and trivially on $A$, $m$ is the normalized counting measure on $\Z/p\Z$ and $(A,\nu)$ is a standard probability space.
Fix $(\tau_1,...,\tau_n)\in \mathfrak S_p^n$ and $(S_1,...,S_n), (T_1,...,T_n)\in[\mathcal R_{\Z/p\Z}]^n$ such that for every $x\in A\times \Z/p\Z$, both the $(S_1,...,S_n)$- and the $(T_1,...,T_n)$-actions on $[x]_{\mathcal R_{\Z/p\Z}}$ are conjugate to $(\tau_1,...,\tau_n)$. 

Then $(S_1,...,S_n)$ and $(T_1,...,T_n)$ are diagonally conjugate: there is $S\in[\mathcal R_{\Z/p\Z}]$ such that for all $i\in\{1,...,n\}$, 
$ST_iS\inv=S_i$.
\end{lem}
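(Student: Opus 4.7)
The plan is to reduce the problem to a pointwise measurable selection of conjugators in the finite group $\mathfrak{S}_p$. The product structure of the action gives a natural identification of $[\mathcal{R}_{\Z/p\Z}]$ with $\LL^0(A,\nu,\mathfrak{S}_p)$: since every $\mathcal{R}_{\Z/p\Z}$-orbit equals $\Z/p\Z\times\{a\}$ for some $a\in A$, any $T\in[\mathcal{R}_{\Z/p\Z}]$ is determined by a measurable map $A\to\mathfrak{S}_p$ describing how $T$ permutes each fiber. Because $m$ is the uniform measure on $\Z/p\Z$, any such fiberwise permutation preserves $m\times\nu$, so every such map does define an element of the full group. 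Under this identification the tuples $(S_1,\dots,S_n)$ and $(T_1,\dots,T_n)$ become measurable maps $s,t\colon A\to\mathfrak{S}_p^n$, and the hypothesis says that for almost every $a$ both $s(a)$ and $t(a)$ lie in the conjugacy class $C\subseteq\mathfrak{S}_p^n$ of $(\tau_1,\dots,\tau_n)$.

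Next, I would produce a Borel conjugator-selection map on $C\times C$. The set
\[
\{(\rho,\rho',\pi)\in C\times C\times\mathfrak{S}_p:\pi\rho'_i\pi\inv=\rho_i\text{ for all }i\}
\]
is a Borel (indeed finite) subset of a finite set, and its projection onto $C\times C$ is surjective by the assumption that every element of $C$ is conjugate to $(\tau_1,\dots,\tau_n)$. Since $\mathfrak{S}_p$ is finite we can simply pick, for each pair $(\rho,\rho')\in C\times C$, the lexicographically smallest conjugator, yielding a Borel map $\Psi\colon C\times C\to\mathfrak{S}_p$ with $\Psi(\rho,\rho')\rho'_i\Psi(\rho,\rho')\inv=\rho_i$ for every $i$.

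Then $a\mapsto\sigma(a):=\Psi(s(a),t(a))$ is a measurable map $A\to\mathfrak{S}_p$, hence defines, via the identification above, an element $S\in[\mathcal{R}_{\Z/p\Z}]$. By construction, for almost every $a\in A$ we have $\sigma(a)t_i(a)\sigma(a)\inv=s_i(a)$ for all $i$, which translates back as $ST_iS\inv=S_i$ for all $i\in\{1,\dots,n\}$, as required.

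There is no real obstacle here: the only mild point to verify carefully is the identification $[\mathcal{R}_{\Z/p\Z}]\cong\LL^0(A,\nu,\mathfrak{S}_p)$ (including that the chosen map $\sigma$ actually yields a non-singular, in fact measure-preserving, transformation, which follows from the uniformity of $m$). Once that is in place the argument is a short measurable-selection exercise using the finiteness of $\mathfrak{S}_p$.
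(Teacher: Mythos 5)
Your proof is correct, and its main structural move is the same as the paper's: identify $[\mathcal R_{\Z/p\Z}]$ with $\LL^0(A,\nu,\mathfrak S_p)$ acting fiberwise (this is exactly the isomorphism $\Psi$ in the paper's proof), translate the hypothesis into the statement that the two tuples, viewed as measurable maps $A\to\mathfrak S_p^n$, take values a.e.\ in the diagonal conjugacy class $C$ of $(\tau_1,\dots,\tau_n)$, and then select a conjugator measurably in each fiber. Where you diverge is the selection step: the paper invokes its Equation (1) (the description of $\LL^0(X,\mu;G)$-orbits from Theorem \ref{thm:les orbites comaigres passent aux L^0}, whose proof rests on the Jankov--von Neumann uniformization theorem) to conclude that both tuples lie in the $\LL^0(A,\nu;\mathfrak S_p)$-orbit of the constant tuple $(\overline{\tau_i})_{i=1}^n$, whereas you construct the conjugator directly by choosing, say, the lexicographically smallest $\pi\in\mathfrak S_p$ conjugating $t(a)$ to $s(a)$, which is automatically Borel because $C\times C$ is finite. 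Your route is more elementary and self-contained (no uniformization theorem needed, since finiteness of $\mathfrak S_p$ trivializes the selection), while the paper's route recycles machinery it has already set up and makes the statement an instance of the general $\LL^0$-orbit picture; both are complete and the computational content is the same.
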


\begin{proof}
View $\mathfrak S_p$ as the group of permutations of $\Z/p\Z$. Then $\LL^0(A,\mu,\mathfrak S_p)$ acts on $\Z/p\Z\times A$ as follows: for every $f\in \LL^0(A,\mu;\mathfrak S_p)$ and $(g,x)\in \Z/p\Z\times A$,
$$f\cdot (g,x)=(f(x)g,x).$$
This yields a group homomorphism $\Psi: \LL^0(A,\mu;\mathfrak S_p)\to[\mathcal R_{\Z/p\Z}]$ which is easily seen to be an isomorphism. 

The group $\LL^0(A,\nu;\mathfrak S_p)$ acts on $\LL^0(A,\nu,\mathfrak S_p)^n=\LL^0(A,\nu,\mathfrak S_p^n)$ by diagonal conjugation. We have to show that $(\Psi\inv(S_i))_{i=1}^n$ and $(\Psi\inv(T_i))_{i=1}^n$  belong to the same orbit. By assumption, for all $x\in A$, both  $(\Psi\inv(S_i)(x))_{i=1}^n$  and $(\Psi\inv(T_i)(x))_{i=1}^n$ are conjugate to $(\tau_i)_{i=1}^n$. 

But by Equation (\ref{eqn:orbit}) from the proof of Theorem \ref{thm:les orbites comaigres passent aux L^0},  this implies that $(\Psi\inv(S_i))_{i=1}^n$ and $(\Psi\inv(T_i))_{i=1}^n$  both belong to the $\LL^0(A,\nu;\mathfrak S_p)$-orbit of the $n$-tuple of constant maps $(\overline{\tau_i})_{i=1}^n$, completing the proof.
\end{proof}

The following lemma is crucial and mirrors the fact that in $\mathfrak S_\infty^n$, the set $E_n$ defined in Lemma \ref{lem:gdeltasinfini} consists of a single diagonal conjugacy class. 
\begin{lem}\label{lem:singleconjclass}Let $\mathcal R$ be a type III ergodic equivalence relation. Then the set $\mathcal E_n$ defined in Lemma \ref{lem:fngdelta} consists of a single diagonal $[\mathcal R]$-conjugacy class.
\end{lem}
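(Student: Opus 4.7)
Given two tuples $(T_1,\ldots,T_n), (S_1,\ldots,S_n) \in \mathcal E_n$, the plan is to construct a conjugator $U \in [\mathcal R]$ with $UT_iU\inv=S_i$ by decomposing $X$ into pieces indexed by orbit conjugacy type, matching the two sides transversal by transversal via Proposition \ref{prop:transactiononmalg}, and then extending the matching to a measurable orbit-by-orbit intertwining.

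First, I enumerate the countably many conjugacy classes of transitive $n$-tuples of permutations of finite sets as $(\alpha^{(k)})_{k\in\N}$, where $\alpha^{(k)}$ has cardinality $p_k$, and set
$$A_k^T=\{x\in X:\text{the restriction of }(T_1,\ldots,T_n)\text{ to }\la T_1,\ldots,T_n\ra\cdot x\text{ is conjugate to }\alpha^{(k)}\},$$
with $A_k^S$ defined analogously. Condition (\ref{cond:allorbitsfinite2}) implies that the $A_k^T$'s partition $X$ up to a null set, while condition (\ref{cond:manyorbitsconjtoalpha2}) forces the $\mathcal R$-saturation of each $A_k^T$ to have full measure. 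Since this saturation is a countable union of non-singular images of $A_k^T$ (via a Feldman--Moore realization of $\mathcal R$), $A_k^T$ itself has positive measure, and likewise $A_k^S$. I then choose Borel transversals $B_k^T\subseteq A_k^T$ and $B_k^S\subseteq A_k^S$ for the smooth sub-equivalence relations (all orbits of size $p_k$) generated by $T_1,\ldots,T_n$ and $S_1,\ldots,S_n$ respectively; both have positive measure by non-singularity, so Proposition \ref{prop:transactiononmalg} yields $\varphi_k\in[[\mathcal R]]$ with $\dom\varphi_k=B_k^T$ and $\rng\varphi_k=B_k^S$.

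The decisive step is then to extend each $\varphi_k$ to a map $V_k:A_k^T\to A_k^S$ in $[[\mathcal R]]$ intertwining $(T_i)$ and $(S_i)$. For each $b\in B_k^T$, the orbits $\la T_i\ra\cdot b$ and $\la S_i\ra\cdot\varphi_k(b)$ both carry actions of type $\alpha^{(k)}$, so the set of conjugating bijections $\psi_b$ (those with $\psi_b\circ T_i=S_i\circ\psi_b$) between them is a non-empty finite torsor over the centralizer of $\alpha^{(k)}$ in $\mathfrak S_{p_k}$. The main technical obstacle will be the uniform measurable choice of these $\psi_b$'s: I plan to apply the Jankov--von Neumann uniformization theorem to the Borel set of admissible pairs $(b,\psi)$, after identifying each orbit with $\{1,\ldots,p_k\}$ in a Borel way, to obtain a Borel selector $b\mapsto\psi_b$. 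Setting $V_k(x)=\psi_{b(x)}(x)$, where $b(x)$ is the unique representative in $B_k^T$ of the orbit of $x$, gives a Borel bijection $V_k\in[[\mathcal R]]$ intertwining the actions; gluing $U=\bigsqcup_k V_k$ then produces the required element of $[\mathcal R]$. This measurable choice mirrors in the global $\mathcal R$-compatible setting what Lemma \ref{lem:conjugate in S_p} achieves in the product context.
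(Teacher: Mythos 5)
Your argument is correct, and its skeleton is the paper's: partition $X$ (up to null sets) into the sets $A_k^T$, $A_k^S$ according to the conjugacy type of the finite orbit, use Proposition \ref{prop:transactiononmalg} to match the two sides piece by piece, and then conjugate orbit-by-orbit in a measurable way. Where you genuinely diverge is in the last, crucial step. The paper applies Proposition \ref{prop:transactiononmalg} to transversals as you do, but then only records that the resulting map $\psi_k$ is an \emph{orbit equivalence} between the restricted tuples, and corrects it into an actual conjugacy by endowing $A_k$ with a cyclic $\Z/p\Z$-structure and invoking Lemma \ref{lem:conjugate in S_p}, whose proof rests on the identification $[\mathcal R_{\Z/p\Z}]\cong\LL^0(A,\nu,\mathfrak S_p)$ and on Equation (\ref{eqn:orbit}) from Theorem \ref{thm:les orbites comaigres passent aux L^0}; this reuses the $\LL^0$ machinery already built and avoids any hands-on selection. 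You instead perform the conjugation directly: after a Borel identification of each orbit with $\{1,\dots,p_k\}$ (say via a Borel linear order on $X$, as in the paper), you select measurably, for each transversal point $b$, a bijection $\psi_b$ intertwining $(T_i)$ on the orbit of $b$ with $(S_i)$ on the orbit of $\varphi_k(b)$; the set of such $\psi_b$ is indeed a non-empty coset of the centralizer of $\alpha^{(k)}$ in $\mathfrak S_{p_k}$. This makes Lemma \ref{lem:conjugate in S_p} (and hence the detour through Theorem \ref{thm:les orbites comaigres passent aux L^0}) unnecessary, at the price of an explicit uniformization argument --- though note that Jankov--von Neumann is overkill here: since the fiber over each $b$ is a finite subset of the fixed finite set $\mathfrak S_{p_k}$, taking the least admissible permutation in a fixed enumeration already gives a Borel selector, and the resulting $V_k$ lies in $[[\mathcal R]]$ because $x\sim_{\mathcal R}b(x)\sim_{\mathcal R}\varphi_k(b(x))\sim_{\mathcal R}\psi_{b(x)}(x)$. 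Your preliminary observations (each $A_k^T$ has positive measure because condition (\ref{cond:manyorbitsconjtoalpha2}) makes its saturation conull and saturations of null sets are null, and transversals of the finite sub-relations exist and are non-null) are also correct, so the proposal stands as a slightly more elementary, self-contained alternative to the paper's route.
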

\begin{rmq}
The lemma is false if we do not assume $\mathcal R$ is type III. Indeed, if $\mathcal R$ preserves a $\sigma$-finite measure $\nu$, then the measure of the set of $x\in X$ with an orbit of size $n$ becomes an invariant of conjugacy.
\end{rmq}
\begin{proof}

Consider two $n$-tuples $(S_1,...,S_n)$ and $(T_1,...,T_n)$ in $\mathcal E_n$. We want to find $T\in[\mathcal R]$ such that for all $i\in\{1,...,n\}$, $TS_iT\inv=T_i$.

Up to conjugation, there are only countably many transitive $n$-tuples $(\tau_1,...,\tau_n)$ of permutations of finite sets. Let $(\tau_{1,k},...,\tau_{n,k})_{k\in\N}$ be a choice of representatives of transitive $n$-tuples of permutations of finite sets for the conjugacy equivalence relation. 
For every $k\in\N$, let 
\begin{align*}
A_k:=&\{x\in X:\text{the restriction of }(S_1,...,S_n)\text{ to the }\la S_1,...,S_n\ra\text{-orbit of } x\\
&\text{is conjugate to }(\tau_{1,k},...,\tau_{n,k})\}\\
B_k:=&\{x\in X:\text{the restriction of }(T_1,...,T_n)\text{ to the }\la T_1,...,T_n\ra\text{-orbit of } x\\
&\text{is conjugate to }(\tau_{1,k},...,\tau_{n,k})\}\end{align*}
Then each $A_k$ is clearly $\la S_1,...S_n\ra$-invariant while each $B_k$ is $\la T_1,...,T_n\ra$-invariant. Moreover, since $(S_1,...,S_n)\in\mathcal E_n$ and $(T_1,...,T_n)\in\mathcal E_n$, we see that both $(A_k)_{k\in\N}$ and $(B_k)_{k\in\N}$ are partitions\footnote{By convention, this also means that each $A_k$ (resp. $B_k$) has positive measure.} of $X$. 

For each $k\in\N$, we will build a map $\varphi_k: A_k\to B_k$ belonging to $[[\mathcal R]]$ which conjugates $(S_1,...,S_n)_{\restriction A_k}$ to $(T_1,...,T_n)_{\restriction B_k}$. Granting this, the element $T\in[\mathcal R]$ obtained by gluing together the $\varphi_k$'s will conjugate $(S_1,...,S_n)$ to $(T_1,...,T_n)$, which will complete the proof the lemma. 

For the construction, fix $k\in\N$ and let $p\in\N$ be the size of the orbit of $\la \tau_{1,k},...\tau_{n,k}\ra$. Then one may as well assume that $\tau_{1,k},...,\tau_{n,k}$ belong to the group $\mathfrak S_p$ of permutations of the set $\{0,...,p-1\}$. By assumption the $\la S_1,...,S_n\ra$-orbit of every $x$ in $A$ has cardinality $p$. Fix a Borel linear order $<$ on $X$ (as one may assume $X=[0,1]$, such an order exists). Then the set 
$$A=\{x\in A_k: x\text{ is the $<$-minimum of its}\la S_1,...,S_n\ra\text{-orbit}\}$$
is Borel and we define $U\in[\mathcal R]$ to be the map which is the identity outside of $A_k$ and sends every $x\in A_k$ to its successor for the cyclic order induced by $<$ on the $\la S_1,...,S_n\ra$-orbit of $x$.  By construction, we have that $(U^i(A))_{i=0}^{p-1}$ is a partition of $A_k$.

Similarly, we can find $B\subseteq B_k$ and $V\in[\mathcal R]$ such that $(V^i(B))_{i=0}^{p-1}$ is a partition of $B_k$. 

Since $\mathcal R$ is type III ergodic, by Proposition \ref{prop:transactiononmalg} there exists $\psi\in [[\mathcal R]]$ such that $\psi(A)=B$. Define $\psi_k: A_k\to B_k$ by: for all $i\in\{1,...,k\}$ and $x\in U^i(A)$, 
$$\psi_k(x)=V^i\psi U^{-i}(x).$$
Clearly, $\psi_k$ is an orbit equivalence between the equivalence relations induced by $(S_1,...,S_n)_{\restriction A_k}$ and $(T_1,...,T_n)_{\restriction B_k}$.

To conclude, note that the automorphism $U\in[\mathcal R]$ defined above yields an action of the cyclic group $\Z/p\Z$ on $A_k=\bigsqcup_{i=0}^{p-1} U^i(A)$ which is conjugate to the action defined in Lemma \ref{lem:conjugate in S_p}. We may thus apply this lemma to $(S_{i\restriction A_k})_{i=1}^k$ and $(\psi_k\inv T_{i\restriction B_k}\psi_k)_{i=1}^n$ for $(\tau_i)_{i=1}^n=(\tau_{i,k})_{i=1}^n$: we find $S\in[\mathcal R_{\Z/p\Z}]$ such that for all $i\in\{1,...,n\}$,
$$SS_{i\restriction A_k}S\inv=\psi_k\inv T_{i\restriction B_k}\psi_k.$$
Then, the map $\varphi_k=\psi_kS\in[[\mathcal R]]$ conjugates $S_{i\restriction A_k}$ to $T_{i\restriction B_k}$ for all $i\in\{1,...,n\}$, so the lemma is proven.
\end{proof}

We need one more lemma before we embark on the proof of Theorem \ref{thm:fullgroupag1}.

\begin{lem}\label{lem: Sinfini dans [R]}Let $\mathcal R$ be a type III ergodic equivalence relation. Then there exists a subrelation $\mathcal S\subseteq \mathcal R$ such that for every $(\sigma_1,...,\sigma_n)\in\mathfrak S_\infty^n$, there exists $(T_1,...,T_n)\in[\mathcal S]$ such that for almost every $x\in X$, the $(T_1,...,T_n)$-action on $[x]_\mathcal S$ is conjugate to the $(\sigma_1,...,\sigma_n)$-action on $\N$.
\end{lem}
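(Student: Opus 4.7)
The plan is to build $\mathcal S$ so that every $\mathcal S$-class carries a canonical measurable bijection with $\N$; any tuple of abstract permutations of $\N$ can then be pushed through these bijections to produce an element of $[\mathcal S]^n$ with the prescribed orbit action. Concretely, I would start by choosing a Borel partition $(A_i)_{i\in\N}$ of $X$ into sets of positive measure (for instance, a partition of $X=[0,1]$ into intervals of positive length). Using Proposition \ref{prop:transactiononmalg}, for each $i\in\N$ pick $\varphi_i\in[[\mathcal R]]$ with $\dom\varphi_i=A_0$ and $\rng\varphi_i=A_i$, taking $\varphi_0=\mathrm{id}_{A_0}$. Then define $\mathcal S$ by
$$x\sim_{\mathcal S} y\iff \exists x_0\in A_0,\ \exists i,j\in\N,\ x=\varphi_i(x_0)\text{ and }y=\varphi_j(x_0).$$
Since the $\varphi_i$'s have pairwise disjoint ranges (covering $X$), each point of $X$ lies in a unique $\mathcal S$-class, and the $\mathcal S$-class of $\varphi_i(x_0)$ is exactly $\{\varphi_k(x_0):k\in\N\}$. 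Thus $\mathcal S$ is a countable Borel equivalence relation contained in $\mathcal R$, and every $\mathcal S$-class is infinite.

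Next, given $(\sigma_1,\dots,\sigma_n)\in\mathfrak S_\infty^n$, define $T_j:X\to X$ by the rule
$$T_j(\varphi_i(x_0))=\varphi_{\sigma_j(i)}(x_0)\qquad\text{for all }x_0\in A_0,\ i\in\N.$$
Equivalently, $T_{j\restriction A_i}=\varphi_{\sigma_j(i)}\circ\varphi_i\inv$, which is a Borel bijection from $A_i$ to $A_{\sigma_j(i)}$ belonging to $[[\mathcal R]]$. Since $\sigma_j$ is a bijection of $\N$, gluing these pieces gives a Borel bijection of $X$; it is non-singular because it is a countable gluing of non-singular maps; and by construction it preserves every $\mathcal S$-class, so $T_j\in[\mathcal S]$.

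Finally, the map $\N\to[x_0]_\mathcal S$ sending $i\mapsto \varphi_i(x_0)$ is, for each $x_0\in A_0$, a bijection that intertwines the natural action of $\la\sigma_1,\dots,\sigma_n\ra$ on $\N$ with the action of $\la T_1,\dots,T_n\ra$ on $[x_0]_\mathcal S$. As $A_0$ meets every $\mathcal S$-class exactly once, this covers almost every $x\in X$, proving the lemma. The only real content here is the existence of the $\varphi_i$, which is exactly where the type III ergodicity of $\mathcal R$ enters through Proposition \ref{prop:transactiononmalg}; the rest of the argument is a purely combinatorial construction, so I do not expect any serious obstacle.
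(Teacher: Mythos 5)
Your proof is correct and follows essentially the same route as the paper: partition $X$ into positive-measure pieces, use Proposition \ref{prop:transactiononmalg} to transport between them, obtain a subrelation $\mathcal S$ each of whose classes meets every piece exactly once, and push permutations of the index set through this parametrization. The only cosmetic difference is that you use a ``hub'' of maps $\varphi_i\colon A_0\to A_i$ indexed by $\N$, whereas the paper chains maps $A_k\to A_{k+1}$ over $\Z$ and glues them into a single automorphism $T$ whose orbit relation is $\mathcal S$; both yield the same canonical bijection of each class with the index set.
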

\begin{proof}
For this proof, it is easier to think of $\mathfrak S_\infty$ as the group of permutations of $\Z$.
Let us write $X$ as a countable partition $(A_k)_{k\in\Z}$, where each $A_k$ has positive measure. By Proposition \ref{prop:transactiononmalg}, for each $k\in\Z$ we may and do fix $\varphi_k: A_k\to A_{k+1}$ belonging to $[[\mathcal R]]$. Gluing these $\varphi_k$'s together, we obtain $T\in[\mathcal R]$ and let $\mathcal S$ denote the equivalence relation induced by $T$. 

There is a natural homomorphism $\Psi:\mathfrak S_\infty\to [\mathcal S]$ given by: for all $\sigma\in\mathfrak S_\infty$,  all $k\in\Z$, and all $x\in A_k$, 
$$\Psi(\sigma)(x)=T^{\sigma(k)-k}(x).$$
Note that for all $x\in A_0$, we have $\Psi(\sigma)(T^k(x))=T^{\sigma(k)}(x)$. 
It is then easy to check that  for every $(\sigma_1,...,\sigma_n)\in\mathfrak S_\infty^n$, the elements of the full group of $\mathcal S$ defined by $T_i=\Psi(\sigma_i)$ satisfy the requirements of the lemma.
\end{proof}

A non-singular equivalence relation $\mathcal R$ on $(X,\mu)$ is \textbf{hyperfinite} if it can be written as an increasing union of finite Borel equivalence subrelations (i.e. with finite equivalence classes). By a result of Krieger \cite[Thm. 4.1]{MR0240279}, which is actually true in the pure Borel setting (see \cite{weiss1984measurable,slaman1988definable}), these arise exactly as the equivalence relations induced by non-singular $\Z$-actions. So Theorem \ref{thm:fullgroupag1} may be reformulated as follows.

\begin{thm}\label{thm:fullgrphyphasag}Let $\mathcal R$ be a hyperfinite type III ergodic equivalence relation. Then the full group of $\mathcal R$ has ample generics. 
\end{thm}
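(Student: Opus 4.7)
The plan is to show that the set $\mathcal E_n$ defined in Lemma \ref{lem:fngdelta} is a dense $G_\delta$ single diagonal conjugacy class in $[\mathcal R]^n$ for every $n \geq 1$; by the Baire category theorem this immediately yields ample generics. Lemma \ref{lem:fngdelta} provides the $G_\delta$ property, Lemma \ref{lem:singleconjclass} shows $\mathcal E_n$ is contained in a single diagonal conjugacy class, and non-emptiness of $\mathcal E_n$ is witnessed by applying Lemma \ref{lem: Sinfini dans [R]} to any $(\sigma_1,\ldots,\sigma_n)$ in the set $E_n \subseteq \mathfrak S_\infty^n$ of Lemma \ref{lem:gdeltasinfini}, which is a non-empty dense $G_\delta$ by the theorem of Hodges, Hodkinson, Lascar and Shelah. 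The real content of the argument is therefore the density of $\mathcal E_n$ in $[\mathcal R]^n$, and this is the step where hyperfiniteness genuinely enters.

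For density, I fix $(T_1,\ldots,T_n) \in [\mathcal R]^n$ and $\epsilon > 0$, and build $(T_1^*,\ldots,T_n^*) \in \mathcal E_n$ with $d_u(T_i, T_i^*) < \epsilon$ for each $i$ in two stages. First, using hyperfiniteness, I write $\mathcal R = \bigcup_k \mathcal R_k$ as an increasing union of finite Borel subrelations and approximate $(T_1,\ldots,T_n)$ componentwise by a tuple $(T_1',\ldots,T_n') \in [\mathcal R_k]^n$ within uniform distance $\epsilon/2$; this rests on the standard density of $\bigcup_k [\mathcal R_k]$ in $[\mathcal R]$. Elements of $[\mathcal R_k]$ have finite orbits, so condition (\ref{cond:allorbitsfinite2}) of Lemma \ref{lem:fngdelta} is already attained. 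Second, I choose a $\la T_1',\ldots,T_n'\ra$-invariant Borel set $A \subseteq X$ of measure less than $\epsilon/2$ that meets almost every $\mathcal R$-orbit, obtained by saturating a small Borel set of positive measure under the finite $\la T_1',\ldots,T_n'\ra$-orbits: ergodicity of $\mathcal R$ guarantees the intersection property while finiteness of these orbits keeps the saturation small. By Lemma \ref{lem:restriction type III} the restriction $\mathcal R|_A$ is type III ergodic, so Lemma \ref{lem: Sinfini dans [R]} applied to $\mathcal R|_A$ with input a fixed $(\sigma_1,\ldots,\sigma_n) \in E_n$ produces a subrelation $\mathcal S_A \subseteq \mathcal R|_A$ and $(U_1,\ldots,U_n) \in [\mathcal S_A]^n$ whose action on each $\mathcal S_A$-orbit is conjugate to the $(\sigma_1,\ldots,\sigma_n)$-action on $\N$. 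Setting $T_i^* := T_i'$ on $X \setminus A$ and $T_i^* := U_i$ on $A$ yields a well-defined element of $[\mathcal R]$ since both $A$ and $X \setminus A$ are invariant by construction, and $d_u(T_i^*, T_i) < \epsilon$.

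To verify $(T_1^*,\ldots,T_n^*) \in \mathcal E_n$: condition (\ref{cond:allorbitsfinite2}) holds because $\la T_1^*,\ldots,T_n^*\ra$-orbits are either $\la T_1',\ldots,T_n'\ra$-orbits inside $X \setminus A$ (finite) or $\la U_1,\ldots,U_n\ra$-orbits inside $A$ (finite, since $(\sigma_1,\ldots,\sigma_n) \in E_n$ has only finite orbits on $\N$). For condition (\ref{cond:manyorbitsconjtoalpha2}), almost every $\mathcal R$-orbit meets $A$ in at least one point; this point generates an $\mathcal S_A$-orbit, which is contained in the ambient $\mathcal R$-orbit and on which $(T_1^*,\ldots,T_n^*)$ acts conjugately to the $(\sigma_1,\ldots,\sigma_n)$-action on $\N$, hence realizes every transitive finite $n$-tuple as the restriction to infinitely many $\la T_1^*,\ldots,T_n^*\ra$-orbits. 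The main obstacle is the construction of the set $A$ with its three competing requirements --- invariance under $\la T_1',\ldots,T_n'\ra$, small measure, and meeting almost every $\mathcal R$-class --- which is a Rokhlin-style measure-theoretic selection made feasible by the ergodicity and aperiodicity of $\mathcal R$.
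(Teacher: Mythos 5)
Your proof is correct and follows essentially the same route as the paper: $\mathcal E_n$ is a $G_\delta$ contained in a single diagonal conjugacy class by Lemmas \ref{lem:fngdelta} and \ref{lem:singleconjclass}, and density is obtained by first approximating by tuples with finite orbits (hyperfiniteness plus the Kittrell--Tsankov density of $\bigcup_k[\mathcal R_k]$) and then redefining the tuple on a small invariant set of positive measure via Lemmas \ref{lem:restriction type III} and \ref{lem: Sinfini dans [R]}. The only point you state a bit loosely --- that saturating a small positive-measure set under the finite orbits can be kept of measure less than $\epsilon/2$, which requires a short argument since the transformations are merely non-singular and orbit sizes are unbounded --- is glossed over in the paper's proof as well.
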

\begin{proof}
Fix $n\in\N$. By Lemma \ref{lem:singleconjclass}, the set $\mathcal E_n$ consists of a single $[\mathcal R]$-conjugacy class and it is $G_\delta$ by Lemma \ref{lem:fngdelta}. 

Therefore, we only have to show that $\mathcal E_n$ is dense. Since $\mathcal R$ is hyperfinite, it can be written as an increasing union of finite Borel equivalence subrelations $\mathcal R=\bigcup_{k\in\N} \mathcal R_k$. It follows that $\bigcup_{k\in\N} [\mathcal R_k]$ is dense in $[\mathcal R]$ (see \cite[Thm. 4.7]{MR2599891} whose proof readily adapts to the non-singular case). 

But then, this implies that the set $\mathcal F_n$ of $n$-tuples $(T_1,...,T_n)\in[\mathcal R]^n$ with only finite orbits is dense in $[\mathcal R]^n$. So it suffices to be able to approximate elements of $\mathcal F_n$ by elements of $\mathcal E_n$.

To this end, let $(T_1,...,T_n)\in\mathcal F_n$ and fix $\epsilon>0$. Since $(T_1,...,T_n)$ only has finite orbits, we can find a $(T_1,...,T_n)$-invariant set $A$ of measure less than $\epsilon$. 

Let us fix some $(\sigma_1,...,\sigma_n)\in E_n$ with the notation of Lemma \ref{lem:gdeltasinfini}. The restriction of $\mathcal R$ to $A$ is type III ergodic by Lemma \ref{lem:restriction type III}, so we can apply Lemma \ref{lem: Sinfini dans [R]} to it. We thus find an equivalence subrelation $\mathcal S\subseteq \mathcal R_{\restriction A}$ and $T'_1,...,T'_n\in[\mathcal S]$ such that the action of $(T'_1,...,T'_n)$ on almost every $\mathcal S$-class is conjugate to the action of $(\sigma_1,...,\sigma_n)$ on $\N$.
 
We then define $(\tilde T_1,...,\tilde T_n)\in[\mathcal R]^n$ by: for every $i\in\{1,...,n\}$ and $x\in X$,
$$\tilde T_i(x)=\left\{\begin{array}{ll}T'_i(x) & \text{if }x\in A, \\T_i(x) & \text{otherwise.}\end{array}\right.$$
  Because $\mu(A)<\epsilon$, each $\tilde T_i$ is $\epsilon$-close to $T_i$. Moreover by ergodicity almost every $\mathcal R$-class meets $A$, which implies by the construction of $(T'_1,...,T'_n)$ that the newly obtained $(\tilde T_1,...,\tilde T_n)$ belong to $\mathcal E_n$. Thus $\mathcal E_n$ is dense, which ends the proof.
\end{proof}

\begin{rmq}Using the framework presented in \cite[Sec. 2.1]{lm14nonerg}, one can remove the ergodicity assumption in Theorem \ref{thm:fullgrphyphasag}. 
\end{rmq}

\section{Further remarks}\label{sec:furthrem}

Using Proposition \ref{prop:transactiononmalg}, we see that Kitrell and Tsankov's result on automatic continuity for ergodic full groups of type II$_1$  (\cite[Thm. 3.1]{MR2599891}) readily adapts to the type III ergodic case. We do not know if all full groups of type III equivalence relation satisfy the automatic continuity property.

In the proof of Theorem \ref{thm:fullgrphyphasag} for $n=1$, we can replace hyperfiniteness by Rohlin's lemma (see \cite[Thm. 1]{chacon1965approximation}) and obtain that $[\mathcal R]$ has a comeager conjugacy class whenever $\mathcal R$ is a type III ergodic equivalence relation. We conjecture that however, as soon as $\mathcal R$ is not hyperfinite, $[\mathcal R]$ does not have ample generics\footnote{As was pointed out before, a good clue for this is the fact that the conjugacy class $\mathcal E_n$ ceases to be dense.}. 
 
Let us mention another result on full groups of type III which answers a question which was asked by Houdayer and Paulin to the second author. 

\begin{thm}Whenever $\mathcal R$ is a type III ergodic equivalence relation, its full group has topological rank\footnote{The topological rank of a topological group is the minimal number of elements needed to generate a dense subgroup.} 2. 
\end{thm}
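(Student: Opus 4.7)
The plan is to exhibit two elements $T, U \in [\mathcal R]$ whose generated subgroup is dense in the uniform topology. The strategy parallels the approach used for type $\mathrm{II}_1$ ergodic full groups, with Proposition \ref{prop:transactiononmalg} providing the extra flexibility available in the type III setting.

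The first step is to choose a good first generator $T$. Using Proposition \ref{prop:transactiononmalg}, one builds $T \in [\mathcal R]$ whose orbits form a hyperfinite subrelation $\mathcal S \subseteq \mathcal R$ that is ergodic and still of type III. Concretely, $T$ is constructed via an odometer-type gluing on a countable partition of $X$ into positive-measure pieces, in the spirit of the construction used for Lemma \ref{lem: Sinfini dans [R]}, with the freedom to choose partial isomorphisms given by Proposition \ref{prop:transactiononmalg}. By Theorem \ref{thm:fullgrphyphasag}, the full group $[\mathcal S]$ then has ample generics, and in particular admits a comeager conjugacy class.

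The second step is to produce a second generator $U \in [\mathcal R]$ which plays a dual role: its conjugates by powers of $T$ should fill out a dense subset of $[\mathcal S]$ (exploiting the comeager conjugacy class in $[\mathcal S]$ together with the fact that conjugation by $T$-powers acts with rich dynamics on $[\mathcal S]$), and $U$ itself should move positive-measure sets between distinct $\mathcal S$-classes sitting inside the same $\mathcal R$-class, thereby witnessing the extension from $\mathcal S$ to $\mathcal R$. One constructs such $U$ by combining, via Proposition \ref{prop:transactiononmalg}, a well-chosen element of $[\mathcal S]$ with a transverse partial isomorphism jumping between $\mathcal S$-classes within a common $\mathcal R$-class.

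The main obstacle is verifying that $\langle T, U\rangle$ is dense in $[\mathcal R]$. Given $V \in [\mathcal R]$ and $\epsilon > 0$, the approximation proceeds in two stages: first use iterated conjugates of $U$ by $T$ to produce an element $V_0$ whose action on a set of measure greater than $1 - \epsilon$ differs from $V$ only within $\mathcal S$-classes; then approximate $V V_0^{-1}$ inside $[\mathcal S]$ by a word in $T^{\pm 1}$ and the $T$-conjugates of the intra-$\mathcal S$-class part of $U$, using the density of such conjugates derived from the comeager conjugacy class of $[\mathcal S]$. Assembling these approximations into a single word in $T^{\pm 1}$ and $U^{\pm 1}$ that remains uniformly close to $V$ is the delicate point; this is where the type III hypothesis enters in its strongest form, since Proposition \ref{prop:transactiononmalg} is the tool that makes the transverse approximation uniform in the measure of the approximating sets, and this tool is genuinely unavailable in the type $\mathrm{II}$ setting.
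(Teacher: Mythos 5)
There is a genuine gap at the heart of your argument: the mechanism by which the closure of $\langle T,U\rangle$ is supposed to swallow $[\mathcal S]$. You claim that the conjugates of $U$ (or of its intra-$\mathcal S$ part) by the powers of $T$ fill out a dense subset of $[\mathcal S]$, and you justify this by the comeager conjugacy class coming from ample generics of $[\mathcal S]$. But the comeager conjugacy class is an orbit of the conjugation action of the \emph{whole Polish group} $[\mathcal S]$ on itself; it says nothing about the orbit of a single element under conjugation by the cyclic group $\langle T\rangle$. That orbit $\{T^nUT^{-n}:n\in\Z\}$ is a countable set with no reason whatsoever to be dense in $[\mathcal S]$ (the powers of $T$ are very far from dense in $[\mathcal S]$, so genericity of $U$'s $[\mathcal S]$-conjugacy class buys you nothing here). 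Since this is exactly the step that would let your second stage approximate $VV_0^{-1}$ inside $[\mathcal S]$, the two-stage approximation collapses, and the remaining appeals to Proposition \ref{prop:transactiononmalg} (``making the transverse approximation uniform'') do not supply a substitute argument. Moreover, even granting that the closure contains $[\mathcal S]$, passing from ``$[\mathcal S]$ plus a transverse element'' to density in $[\mathcal R]$ requires a Kittrell--Tsankov-type theorem in the non-singular setting, which you never invoke.

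For comparison, the paper's sketch gets the crucial inclusion $[T_0]\subseteq\overline{\langle T_0,UC\rangle}$ not from genericity but from a concrete structural fact: $T_0$ is chosen to be a non-singular \emph{odometer}, so the dyadic permutations are dense in $[T_0]$, and then a small-support perturbation argument (as in the type II$_1$ topological rank results) produces a single $U\in[T_0]$ such that this inclusion holds for \emph{every} order-3 element $C$ of $[\mathcal R]$. The second generator is then $UC$, where $C$ is a $3$-cycle built from a pre-$3$-cycle $\{\varphi,\psi\}$ obtained via Proposition \ref{prop:transactiononmalg}, chosen so that $T_0$ together with $\varphi$ generates $\mathcal R$; density of $\langle T_0, UC\rangle$ then follows because the closure contains $[T_0]$ and $C$, and the non-singular version of Kittrell and Tsankov's theorem shows the group generated by $[T_0]$ and $C$ is dense in $[\mathcal R]$. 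If you want to repair your proposal, you need an analogue of these two ingredients: a reason the closure of $\langle T,U\rangle$ contains a dense subgroup of $[\mathcal S]$ (dyadic permutations and the small-support trick, not genericity), and an explicit density criterion for subgroups containing $[\mathcal S]$ and a relation-generating transverse element.
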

\begin{proof}[Sketch of proof]
First, one can use the ideas in \cite{MR1116644} to show that $[\mathcal R]$ contains a type III ergodic element $T_0$ which is a non-singular odometer. Then, since by \cite[Lem. 5.2]{MR2311665} the group of dyadic permutations is dense in $[T_0]$, the exact same proof as for \cite[Thm. 4.1]{lm14nonerg} yields that one can find $U\in[T_0]$ of small support such that whenever $C\in[\mathcal R]$ has order 3, then the closed subgroup generated by $T$ and $UC$ contains $[T_0]$. 

Now, using Proposition \ref{prop:transactiononmalg}, one can show that the equivalence relation $\mathcal R$ is generated by $T_0$ and $\varphi\in[[\mathcal R]]$ whose range is disjoint from its domain and such that $\mu(\dom\varphi\cup\rng\varphi)<1$. Then, again using Proposition \ref{prop:transactiononmalg}, one can find $\psi\in[[\mathcal R_0]]$ with $\dom\psi=\rng\varphi$ and such that $\rng\psi$ is disjoint from $\dom\varphi\cup \rng\varphi$: in the terminology of \cite{gentopergo}, $\{\varphi,\psi\}$ is a pre-3-cycle. Let $C$ be the associated $3$-cycle. Then one can conclude that $\la T,UC\ra$ is a dense subgroup of $[\mathcal R]$ exactly as in \cite{gentopergo}, noting that Kittrell and Tsankov's theorem \cite[Thm. 4.7]{MR2599891} holds in the non-singular case. 
\end{proof}
\begin{rmq}Roughly speaking, the above result follows from the fact that any type III ergodic equivalence relation has cost 1. Using the same proof as for \cite[Prop. 5.1]{lm14nonerg}, one can show that for any type III ergodic equivalence relation $\mathcal R$, the set of pairs $(T,U)\in(\mathrm{APER}\cap[\mathcal R])\times [\mathcal R]$ which generate a dense subgroup of $[\mathcal R]$ is a dense $G_\delta$ set.
\end{rmq}


\bibliographystyle{alpha}
\bibliography{/Users/francoislemaitre/Dropbox/Maths/biblio}

\end{document}